\documentclass[10pt]{amsart}

\usepackage[margin=3cm]{geometry}

\usepackage[english]{babel}
\usepackage[T1]{fontenc}
\usepackage[utf8]{inputenc}

\usepackage{graphicx}
\usepackage{enumerate}
\usepackage{subfig}
\usepackage{array}
\usepackage{color}
\usepackage{stmaryrd}
\usepackage{amsmath}
\usepackage{amssymb}
\usepackage{latexsym}
\usepackage{mathrsfs}
\usepackage{amsthm}
\usepackage{mathtools}
\usepackage{hyperref}

\newtheorem{theorem}{Theorem}
\newtheorem{corollary}[theorem]{Corollary}

\newtheorem{lemma}[theorem]{Lemma}

\theoremstyle{definition}

\theoremstyle{remark}
\newtheorem{remark}[theorem]{Remark}

\newcommand{\bS}{\mathbb S}

\newcommand{\eps}{\varepsilon}

\DeclareMathOperator{\diver}{div}
\DeclareMathOperator{\grad}{grad}

\usepackage{accents}

\newcommand{\lp}{\left(}
\newcommand{\rp}{\right)}
\newcommand{\lab}{\left|}
\newcommand{\rab}{\right|}
\newcommand{\lb}{\left[}
\newcommand{\rb}{\right]}
\newcommand{\lc}{\left\{}
\newcommand{\rc}{\right\}}

\newcommand{\Lap}{\Delta}

\newcommand{\E}{\mathbb{E}}
\newcommand{\Prob}{\mathbb{P}}

\newcommand{\bR}{\mathbb{R}}
\newcommand{\Zed}{\mathbb{Z}}

\usepackage[dvipsnames]{xcolor}

\DeclareMathOperator{\Cut}{Cut}

\begin{document}

\title[Heat kernel derivative bounds]{Localized bounds on log-derivatives of the heat kernel on incomplete Riemannian manifolds}
\author{Robert W.\ Neel \and Ludovic Sacchelli}
\address{Department of Mathematics, Lehigh University, Bethlehem, Pennsylvania, USA}
\email{robert.neel@lehigh.edu}
\address{Universit\'e C\^ote d'Azur, Inria, CNRS, LJAD, France}
\email{ludovic.sacchelli@inria.fr}
\subjclass[2010]{Primary 58J65; Secondary 58J35 58K55 35K08}
\keywords{heat kernel bounds, logarithmic derivatives, small-time asymptotics, incomplete manifolds}

\begin{abstract} 
Bounds on the logarithmic derivatives of the heat kernel on a compact Riemannian manifolds have been long known, and were recently extended, for the log-gradient and log-Hessian, to general complete Riemannian manifolds. Here, we further extend these bounds to incomplete Riemannan manifolds under the least restrictive condition on the distance to infinity available, for derivatives of all orders. Moreover, we consider not only the usual heat kernel associated to the Laplace-Beltrami operator, but we also allow the addition of a conservative vector field. We show that these bounds are sharp in general, even for compact manifolds, and we discuss the difficulties that arise when the operator incorporates non-conservative vector fields or when the Riemannian structure is weakened to a sub-Riemannian structure. 
\end{abstract}

\maketitle

\section{Introduction}

Uniform bounds for the logarithmic derivatives of the heat kernel on a Riemannian manifold, in small time, have been studied since the 90s. In particular, improving on earlier work of Sheu \cite{Sheu}, Hsu \cite{HsuLogDer} and Stroock-Turetsky \cite{StroockTuretsky} independently proved that, on a compact (smooth) Riemannian manifold $M$, there exist two families of positive constants $C_N$ and $D_N$ such that
\begin{equation}\label{Eqn:LogDer}
\lab \nabla^N_x \log p_t(x,y)\rab \leq C_N\lb \frac{d(x,y)}{t} +\frac{1}{\sqrt{t}}\rb^N ,
\end{equation}
and also
\[
\lab \nabla_x^N p_t(x,y)\rab \leq D_N\lb \frac{d(x,y)}{t} +\frac{1}{\sqrt{t}}\rb^N p_t(x,y) ,
\]
both holding for $(t,x,y)\in (0,1]\times M\times M$. (These two families of bounds are widely-known to be equivalent, though, for completeness, we provide the details below.) Here $p_t(x,y)$ is the usual heat kernel (corresponding to the Laplace-Beltrami operator), $\nabla^N$ represents $N$th-order covariant differentiation with $|\cdot|$ the (pointwise) norm on $N$-tensors induced by the Riemannian mertric, and the subscript $x$ indicates that the covariant derivative acts on the first spatial variable (that is, the $x$-variable). In this case, $p_t$ is symmetric, so the result also holds for $y$-derivatives, but the difference in technique for studying the two spatial variables will play a prominent role in what follows, so we emphasize where our derivatives fall throughout.

The work of Sheu, Hsu, and Stroock-Turetsky naturally applies to $x$-derivatives because it considers perturbations of the entire (random) path induced by perturbing the starting point of the diffusion. However, a perturbation of the entire path is in principle a global object, making localization, and thus the extension of these results to non-compact manifolds, non-trivial (at least those without bounded geometry). Indeed, see \cite{Elton-Bismut} for later work in this spirit, looking at more general maps between vector bundles, but still over a compact Riemannian manifold. Recently, Chen-Li-Wu \cite{XM-GenComplete} extended \eqref{Eqn:LogDer} for $N=1,2$ to general complete Riemannian manifolds (that is, complete manifolds with no additional assumptions on the geometry).

Interest in such estimates comes from a few sources. For example, control of the logarithmic gradient (with respect to $x$) of $p_t(x,y)$ is central to the question of whether the Brownian bridge process is a semi-martingale up to its terminal time (which is essentially a question of the integrability of the drift term, given in terms of the logarithmic derivative). While it has long been known that the Brownian bridge is a semi-martingale on a compact Riemannian manifold (see \cite{EltonBook}), extending this result to general complete manifolds was only accomplished a few years ago by G\"{u}neysu \cite{BGun}, which required sufficient localization of the bound on the logarithmic gradient of the heal kernel. More generally, control of the logarithmic gradient and logarithmic Hessian of the heat kernel, on compact manifolds (or complete manifolds with bounded geometry), has been used in various aspects of stochastic analysis on loop space; we refer to the introduction of \cite{XM-GenComplete} for an overview of the literature.

The primary goal of the present paper is to further extend \eqref{Eqn:LogDer} to possibly-incomplete Riemannian manifolds under the weakest condition on the distance to infinity available, for all values of $N$. (This obviously includes all complete manifolds, with no additional conditions.) Moreover, we consider not only the usual heat kernel associated to the Laplace-Beltrami operator, but we allow the addition of a (smooth) conservative vector field. That is, if we let $\Delta_{LB}$ be the Laplace-Beltrami operator, we consider the diffusion operator $\Delta=\Delta_{LB}+Z$ where $Z$ is the gradient vector field of some smooth function on $M$. The point is that $Z$ being conservative is exactly the condition that there exists a (smooth) symmetrizing reference measure, namely a volume measure $\nu$ on $M$ such that the heat kernel is symmetric when written with respect to $\nu$. (See Section \ref{Sect:Background} for more details on the heat kernel with respect to an arbitrary volume.) More precisely, our main result is the following.

\begin{theorem}\label{THM:Main}
Let $M$ be a (possible incomplete) Riemannian manifold, let $\Delta=\Delta_{LB}+Z$, where $\Delta_{LB}$ is the Lapace-Beltrami operator and $Z$ is a smooth vector field, and let $p_t(x,y)$ be the heat kernel for $\Delta$ (with respect to the Riemannian volume measure). Suppose that $Z$ is conservative (of course, this includes the case $Z\equiv 0$), and let $K\subset M\times M$ be compact such that there exists $\eps>0$ so that, for all $(x,y)\in K$, $\{z:d(x,z)+d(z,y)< d(x,y)+\eps\}$ has compact closure. Then there exist sequences of constants $C_N$ and $D_N$ such that, for any $N\geq 1$,
\[\begin{split}
&\lab \nabla_x^N \log p_t(x,y)\rab \leq C_N\lb \frac{d(x,y)}{t} +\frac{1}{\sqrt{t}}\rb^N \\
\text{and }&\lab \nabla_x^N p_t(x,y)\rab \leq D_N\lb \frac{d(x,y)}{t} +\frac{1}{\sqrt{t}}\rb^N p_t(x,y)
\end{split}\]
for all $t\in(0,1]$ and $(x,y)\in K$.
\end{theorem}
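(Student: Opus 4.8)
The plan is to reduce the incomplete case to a comparison with a Euclidean-type model via a parametrix/Minakshisundaram–Pleijel-style expansion of the heat kernel, localized near the (relatively compact) set of minimal geodesics between $x$ and $y$. The hypothesis on $K$---that for each $(x,y)\in K$ the $\eps$-fattened geodesic ellipse $\{z: d(x,z)+d(z,y)<d(x,y)+\eps\}$ is relatively compact---is precisely what guarantees that all the relevant geometry (minimizing geodesics, the cut locus structure near them, the heat kernel's short-time behavior) happens inside a fixed compact region, so that one may quote the compact-manifold estimate \eqref{Eqn:LogDer} after a suitable cutoff. Concretely, I would first establish a \emph{Gaussian-type principle of not feeling the boundary}: for $(x,y)\in K$ and $t\in(0,1]$, the heat kernel $p_t(x,y)$ and all its $x$-derivatives up to order $N$ differ from the corresponding quantities for an auxiliary heat kernel $\tilde p_t$ on a compact (or complete bounded-geometry) manifold $\tilde M$ that agrees with $M$ on a neighborhood of the fattened ellipse, by an error of order $\exp(-c/t)$ which is negligible against the stated right-hand side once multiplied by $p_t(x,y)\sim t^{-n/2}e^{-d(x,y)^2/4t}$.

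The key steps, in order, would be: (1) Fix a relatively compact open set $U\subset M$ containing $\bigcup_{(x,y)\in K}\{z:d(x,z)+d(z,y)<d(x,y)+\eps/2\}$ together with the projections of $K$, chosen so that $\overline U$ is compact and contained in a slightly larger relatively compact open set on which the metric and $Z$ are smooth with bounded geometry. (2) Build a closed manifold $\tilde M$ (e.g.\ by a smooth surgery/doubling far from $\overline U$) carrying a metric $\tilde g$ and a conservative vector field $\tilde Z$ agreeing with $(g,Z)$ on $U$; here conservativity of $Z$ is used to ensure $\tilde Z$ can be taken conservative on $\tilde M$, so that $\tilde p_t$ is symmetric with respect to a smooth measure and the Hsu / Stroock–Turetsky bound \eqref{Eqn:LogDer} applies to $\tilde p_t$ verbatim (including the $y$-symmetry we do not actually need). (3) Prove the boundary-insensitivity estimate: using the Duhamel/parametrix comparison, or a probabilistic argument bounding the contribution of Brownian paths that exit $U$ before time $t$, show $|\nabla_x^k(p_t(x,y)-\tilde p_t(x,y))|\le A_k e^{-c/t}$ for $(x,y)\in K$, $t\in(0,1]$, $0\le k\le N$, where the exit estimate exploits that $d(x,y)+\eps/2$ is strictly less than the distance any such path must travel to leave $U$ and return. (4) Combine: on $K$ one has the two-sided Gaussian bound $c_1 t^{-n/2}e^{-(d(x,y)^2+\eps^2)/4t}\le p_t(x,y)$ from the comparison, so the $e^{-c/t}$ errors are dominated; then for the log-derivative bound, write $\nabla_x^N\log p_t$ as a universal polynomial in $\nabla_x^j p_t/p_t$ for $j\le N$ (via Faà di Bruno / the combinatorial identity relating $\log$-derivatives to ratios of derivatives, which the paper presumably records), and plug in the transferred bounds for $\tilde p_t$ plus the negligible corrections; the $p_t$-weighted derivative bound follows symmetrically, or one of the two is proved directly and the equivalence the introduction alludes to does the rest.

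I expect the main obstacle to be step (3), the quantitative \emph{not feeling the boundary} estimate for \emph{all} derivatives uniformly over the compact set $K$ and over $t\in(0,1]$. For the function values this is classical (Varadhan-type off-diagonal Gaussian bounds, or a Duhamel argument with the parametrix), but controlling high-order covariant $x$-derivatives of the difference requires either (a) differentiating the Duhamel formula and absorbing the extra derivatives of the heat semigroup's kernel, using interior Schauder/parabolic regularity on the fixed relatively compact region, or (b) a probabilistic representation of $\nabla_x^N p_t$ (in the spirit of Bismut–Elworthy–Li formulas iterated $N$ times, or the path-perturbation approach of Hsu and Stroock–Turetsky) together with the observation that the perturbation vector fields and their derivatives can be supported inside $U$. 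Option (b) meshes most naturally with how the cited proofs of \eqref{Eqn:LogDer} are structured and is likely how the authors proceed, but it requires care that the iterated perturbation functionals have moments that are controlled uniformly on $K$, which in turn is where the precise relatively-compact-ellipse hypothesis does the work: it pins the bulk of the bridge measure inside $U$ with an $e^{-c/t}$-small complement. A secondary, more technical point is handling the cut locus---minimizing geodesics between $x$ and $y$ may be non-unique and $(x,y)$ may be conjugate---but since the estimate \eqref{Eqn:LogDer} on $\tilde M$ already accommodates this (it holds on all of $\tilde M\times\tilde M$ with no cut-locus restriction), no extra argument is needed beyond transporting it back, which is exactly why reducing to the closed-manifold bound rather than to an explicit Euclidean model is the right move.
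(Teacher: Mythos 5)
Your overall architecture (embed a relatively compact neighborhood of the $\eps$-fattened ellipses into a closed manifold $\tilde M$ carrying an extension of the metric and of the conservative drift, quote the compact-manifold bound there, show the two kernels and their derivatives differ by an exponentially negligible error, and finish with Fa\`a di Bruno) is indeed the skeleton of the paper's proof. But the load-bearing step (3) --- the ``not feeling the boundary'' estimate for \emph{all $x$-derivatives}, uniformly on $K$ --- is exactly the hard point, and as written you have not supplied an argument for it. The option you flag as most likely, iterating Bismut--Elworthy--Li / path-perturbation formulas with perturbation fields supported in $U$, is precisely the approach whose failure to localize motivates the paper: perturbing the starting point perturbs the entire path, so the associated functionals are global and there is no a priori control over the bridge paths that exit $U$, which is why the compact-case proofs of Hsu and Stroock--Turetsky do not localize directly. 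The paper avoids this by never localizing $x$-derivatives at all: it localizes \emph{$y$-derivatives}, which is genuinely easier because in the strong-Markov decomposition $p_t(x,U_1^c,y)=\sum_i\int p^{B''}_{t-\tau}(z,y)\,d\mu^x_i$ the $x$-dependence sits entirely in the hitting measures and one only differentiates under the integral in $y$; the inputs are L\'eandre's coarse bound $\limsup 4t\log|\partial_y^\alpha p_t|\le -d^2$ and the Bailleul--Norris estimate on $p_t(x,U_1^c,y)$, combined with an iterated hitting-time argument (Lemmas \ref{Lem:NestedBalls} and \ref{Lem:Uc}). Only afterwards is the bound converted to $x$-derivatives, by symmetrizing $K$ and using the symmetry of $p_t^\nu$ for the measure $\nu=e^f\mu$ --- this is the essential use of conservativity, whereas in your sketch conservativity only enters to make the compact-model bound applicable, and you are left needing a direct $x$-derivative localization that you do not have. (Your option (a), Duhamel plus interior parabolic regularity in the $x$-variable for fixed $y$, is a plausible alternative, but it still rests on the zeroth-order comparison with the right exponent, which itself requires the hitting-time/Bailleul--Norris machinery under the stated hypothesis on $K$, and you have not carried it out.)

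Two further quantitative points. First, an error bound of the form $A_ke^{-c/t}$ with a single constant $c>0$ is not sufficient: since $p_t(x,y)\asymp e^{-d^2(x,y)/4t}$, the error must be beaten \emph{pointwise} on $K$, i.e.\ the exponent must exceed $d^2(x,y)/4$ by a definite margin for each pair, which is why the paper works to get $\exp\lb-\lp d(x,y)+\eps'/2\rp^2/4t\rb$ (and why the $\eps$-ellipse hypothesis, through $d(x,U_1^c,y)\ge d(x,y)+\eps'$, is used in exactly this form). Your step (4) gestures at this but your formal claim in step (3) does not encode it. Second, a minor attribution issue: for $Z\not\equiv 0$ the compact-manifold estimate \eqref{Eqn:LogDer} is not literally in Hsu or Stroock--Turetsky; the paper invokes \cite{WithLudovic} for that case (the 90s results suffice only when $Z\equiv 0$, as discussed in Remark \ref{Rmk:Elton}).
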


The underlying logic of our proof is that $y$-derivatives are fundamentally easier to localize, because the Markov property applies forward in time, meaning perturbing the endpoint of a (random) path only requires perturbing the path in a neighborhood of the endpoint, as opposed to perturbing the entire path. The existence of a symmetrizing measure then allows us to transfer bounds on $y$-derivatives to $x$-derivatives, which, as indicated above, are generally of more interest.

We complement this bound with a natural improvement away from the cut locus. Let $\Cut(M) \subset M\times M$ be the set of $(x,y)$ such that $y$ is in the cut locus of $x$, or equivalently, $x$ is in the cut locus of $y$ (the relationship is symmetric). 

\begin{theorem}\label{THM:NonCut}
Let everything be as in Theorem \ref{THM:Main}, with the additional condition that $K\cap \Cut(M)=\emptyset$. Then, for any $N\geq 1$, 
\[
-4t  \nabla_x^N \log p_t(x,y) -\nabla_x^N d^2(x,y) = O(t) \quad \text{as $t\searrow 0$,}
\]
uniformly over $(x,y)\in K$.
\end{theorem}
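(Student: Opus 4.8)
The plan is to derive the estimate from the classical small-time off-diagonal expansion of the heat kernel. Away from the cut locus one has, uniformly on compact subsets of $M\times M\setminus\Cut(M)$ and with $C^\infty$-uniformity in the spatial variables,
\begin{equation*}
(4\pi t)^{n/2}\,\e^{d^2(x,y)/(4t)}\,p_t(x,y)=u_0(x,y)+u_1(x,y)\,t+\cdots+u_k(x,y)\,t^k+O(t^{k+1}),
\end{equation*}
where $n=\dim M$, the $u_j$ are smooth and $u_0>0$; concretely, for every $N$ the function $F_t(x,y):=(4\pi t)^{n/2}\e^{d^2(x,y)/(4t)}p_t(x,y)$ satisfies $\nabla_x^N F_t=\nabla_x^N u_0+O(t)$ uniformly on the compact set in question (this is just the $k=0$ case, differentiated $N$ times). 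The drift $Z$, being a lower-order term, enters only the transport equations for the $u_j$, not the leading Gaussian, and its conservativity is irrelevant here.

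The first step is to check this expansion is available on our possibly-incomplete $M$. The short-time off-diagonal asymptotics are essentially local: the parametrix is built in a tubular neighborhood of the (here unique, minimizing, non-conjugate) geodesic from $x$ to $y$, and the error from not extending it globally is exponentially small in $t$ --- the same localization mechanism used to prove Theorem \ref{THM:Main}. The hypothesis on $K$ (that $\{z:d(x,z)+d(z,y)<d(x,y)+\eps\}$ is relatively compact for each $(x,y)\in K$), together with compactness of $K$ and $K\cap\Cut(M)=\emptyset$, guarantees that these tubular neighborhoods remain inside a fixed relatively compact subset of $M$ as $(x,y)$ ranges over $K$. Hence the classical construction applies with all estimates, including the bound on $\nabla_x^N F_t$, uniform over $K$.

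Granting this, the conclusion is a short computation. Since $K$ is compact and disjoint from $\Cut(M)$, both $d^2$ and $u_0$ are smooth on a neighborhood of $K$ with $u_0\geq c>0$ there, so for $t$ small $F_t\geq c/2$ on $K$ and $F_t/u_0=1+O(t)$ with $\nabla_x^N(F_t/u_0)=O(t)$; composing with $\log$ gives $\nabla_x^N\log F_t=\nabla_x^N\log u_0+O(t)$ uniformly on $K$. Writing $\log p_t=-\tfrac{n}{2}\log(4\pi t)-\tfrac{d^2}{4t}+\log F_t$, applying $\nabla_x^N$ (the first term drops, being independent of $x$) and multiplying by $-4t$ yields
\begin{equation*}
-4t\,\nabla_x^N\log p_t(x,y)=\nabla_x^N d^2(x,y)-4t\,\nabla_x^N\log u_0(x,y)+O(t^2)=\nabla_x^N d^2(x,y)+O(t)
\end{equation*}
uniformly over $(x,y)\in K$, which is the asserted estimate.

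I expect the main obstacle to be the first step: verifying that the off-diagonal expansion, and in particular the uniform control of $\nabla_x^N F_t$, genuinely persists on an incomplete manifold. The expansion is standard on complete or compact manifolds, but isolating the local part of the parametrix and showing the global correction, together with its spatial derivatives, is exponentially small uniformly over $K$ is exactly where the $\eps$-neighborhood hypothesis does its work, and where one leans on the localization estimates developed for Theorem \ref{THM:Main}.
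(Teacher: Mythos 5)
Your overall strategy is the same as the paper's: reduce to the classical off-diagonal expansion $(4\pi t)^{n/2}e^{d^2/4t}p_t = u_0 + O(t)$ with $C^\infty$-uniform remainder (Ben Arous/Ludewig), then log-differentiate; the final computation you give matches the paper's. The gap is in the step you yourself flag as the crux, and it is not merely technical. You assert that the error from localizing the parametrix, \emph{together with its $x$-derivatives}, is exponentially small ``by the same localization mechanism used to prove Theorem \ref{THM:Main},'' and that conservativity of $Z$ is irrelevant. But that mechanism (Lemmas \ref{Lem:NestedBalls} and \ref{Lem:Uc}, built on L\'eandre's bound \eqref{Eqn:LeandreCoarse2} and the strong Markov decomposition at the endpoint) controls only \emph{$y$-derivatives} of the ``paths exiting $U_1$'' contribution: perturbing the endpoint only perturbs the path near $y$, whereas perturbing the starting point perturbs the entire path, which is exactly the obstruction discussed in the introduction and in the non-symmetric section of the paper. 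In the paper, the comparison \eqref{Eqn:Close} between $p^{\nu}_t$ and the compactified kernel $p^{\nu,M'}_t$ is proved for derivatives acting on $y$; the $x$-derivative statement of the theorem is then obtained by exchanging $x$ and $y$ using the symmetry of $p^{\nu}_t$, and the existence of the symmetrizing volume $\nu=e^f\mu$ is precisely the conservativity of $Z$. Moreover, the heat-kernel localization input itself (Bailleul--Norris \eqref{Eqn:BN-A}, giving \eqref{Eqn:ThroughUc}) is stated for symmetric operators. So ``conservativity is irrelevant here'' is wrong as the argument stands, and the direct claim of uniform control of $\nabla_x^N F_t$ on an incomplete $M$ has no justification from the tools you invoke; indeed, whether such $x$-derivative localization holds without a symmetrizing measure is left open in the paper.

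The fix is to run your argument for $y$-derivatives: localize the expansion \eqref{Eqn:BA} for $p^{\nu}_t$ via \eqref{Eqn:Close} (derivatives on $y$), conclude $-4t\,\nabla_y^N\log p^{\nu}_t - \nabla_y^N d^2 = O(t)$ uniformly on $K$, then transfer to $x$-derivatives by the symmetry of $p^{\nu}_t$ (after symmetrizing $K$), and finally pass from $p^{\nu}_t$ back to $p_t$ using $\log p_t = \log p^{\nu}_t + \log\frac{d\nu}{d\mu}(y)$, whose $x$-derivatives vanish (and whose $y$-derivatives contribute only $O(t)$ after multiplication by $4t$). One further small caution: your phrase ``differentiated $N$ times'' should be backed by an expansion with bounds on spatial derivatives of the remainder, as in \eqref{Eqn:BA}; this is what Ben Arous and Ludewig supply and what survives the exponentially small correction, but it is not automatic from the undifferentiated expansion.
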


A version of this result (with the $O(t)$ replaced by $o(1)$) was proven on a compact Riemannian manifold in the case when $Z\equiv 0$ in \cite{Stroock-Malliavin} (see also \cite{GongMa} for a variant when $N=2$) and then extended to general complete manifolds in \cite{XM-GenComplete}. We are able to localize it and thus extend it to incomplete $M$ just as above.

Of course, these two results can be combined in various ways. To give one example, we have the following.

\begin{corollary}\label{Cor:Main}
Let everything be as in Theorem \ref{THM:Main}. Then there is a positive constant $c$ and a sequence of constants $C_N$ (not necessarily the same as in Theorem \ref{THM:Main}) such that,
\[\begin{split}
| \nabla \log p_t(x,y) | &\leq \begin{cases}
C_1 \lp \frac{d(x,y)}{t}+1\rp & \text{for $d(x,y)\leq c$} \\
\frac{C_1}{t} & \text{for $d(x,y)> c$}
\end{cases} , \\
\text{for any $N\geq 2$,}\quad | \nabla^N \log p_t(x,y) | &\leq \begin{cases}
\frac{C_N}{t} & \text{for $d(x,y)\leq c$} \\
\frac{C_N}{t^N} & \text{for $d(x,y)> c$}
\end{cases}
\end{split}\]
for all $(t,x,y)\in (0,1]\times K$.
\end{corollary}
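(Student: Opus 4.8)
The plan is to deduce the Corollary by combining Theorems~\ref{THM:Main} and~\ref{THM:NonCut}, after splitting $K$ into a neighborhood of the diagonal, on which the sharper off-cut-locus estimate of Theorem~\ref{THM:NonCut} applies, and its complement, on which $d$ is bounded below and the general bound of Theorem~\ref{THM:Main} is already of the asserted order in $t$. The only genuinely geometric ingredient is the choice of the constant $c$: I need a $c>0$ such that the slice $\{(x,y)\in K:d(x,y)\le c\}$ avoids $\Cut(M)$.

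To obtain $c$, observe that $K\cap\Cut(M)$ is a compact subset of $M\times M$ (it is closed in $K$, for instance because it is contained in the closed set $\{(x,y):d(x,y)\ge\mathrm{inj}(x)\}$), and it is disjoint from the diagonal, since $d(x,\Cut(x))=\mathrm{inj}(x)>0$ for every $x\in M$. Hence $d$ attains a strictly positive minimum on $K\cap\Cut(M)$, unless this set is empty; in either case take $c$ to be half that minimum, respectively $c=1$. Then $K_c:=\{(x,y)\in K:d(x,y)\le c\}$ is compact, is contained in $K$, and is disjoint from $\Cut(M)$, so Theorem~\ref{THM:NonCut} may be applied with $K$ replaced by $K_c$ (the compactness hypothesis of Theorem~\ref{THM:Main} is inherited since $K_c\subset K$).

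On the region $\{(x,y)\in K:d(x,y)>c\}$, Theorem~\ref{THM:Main} gives $|\nabla_x^N\log p_t|\le C_N(d(x,y)/t+1/\sqrt t)^N$; since $d(x,y)\le L:=\max_K d<\infty$ and $1/\sqrt t\le 1/t$ for $t\in(0,1]$, the right-hand side is at most $C_N(L+1)^N t^{-N}$, i.e.\ $C_N/t^N$ for $N\ge2$ and $C_1/t$ for $N=1$ after renaming constants. On the region $K_c$, Theorem~\ref{THM:NonCut} gives $\nabla_x^N\log p_t(x,y)=-\tfrac1{4t}\nabla_x^N d^2(x,y)+O(1)$ uniformly as $t\searrow0$, meaning $|\nabla_x^N\log p_t+\tfrac1{4t}\nabla_x^N d^2|$ is bounded on $(0,t_0]\times K_c$ for some $t_0\in(0,1]$. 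Since $d^2$ is smooth off $\Cut(M)$ and $K_c$ is compact, $\nabla_x^N d^2$ is bounded on $K_c$; for $N\ge2$ this yields $|\nabla_x^N\log p_t|\le C_N/t$ on $(0,t_0]\times K_c$ (using $1\le1/t$), while for $N=1$ the identity $|\nabla_x d^2(x,y)|=2d(x,y)$ (the distance has unit gradient, with the expression vanishing on the diagonal) gives $|\nabla_x\log p_t|\le d(x,y)/(2t)+O(1)\le C_1(d(x,y)/t+1)$ there. Finally, on the compact set $[t_0,1]\times K_c$ the function $\nabla_x^N\log p_t$ is continuous (the heat kernel is smooth and positive, points of $K_c$ lying in a common component), hence bounded, and such a bound is absorbed into $C_N/t$, respectively $C_1(d/t+1)$, because $1/t\ge1$. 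Taking $C_N$ to be the maximum of the finitely many constants produced, and noting that the regions $d\le c$ and $d>c$ are exactly those in the statement, completes the proof.

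I expect the one point requiring care to be the construction of $c$ — the geometric fact that, inside the compact $K$, the cut locus stays a definite distance from the diagonal — which rests on the pointwise positivity of the injectivity radius together with enough semicontinuity (or closedness of $\Cut(M)$) to make $K\cap\Cut(M)$ closed. Everything else is bookkeeping with the two theorems, the elementary inequality $t\le\sqrt t$ on $(0,1]$, and a compactness argument to handle $t$ bounded away from $0$.
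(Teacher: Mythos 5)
Your proof is correct and is essentially the argument the paper intends for this corollary (the paper leaves it as a direct combination of Theorems \ref{THM:Main} and \ref{THM:NonCut}): split $K$ by a threshold $c$ chosen so that $\{(x,y)\in K: d(x,y)\le c\}$ misses $\Cut(M)$, apply Theorem \ref{THM:NonCut} there (with $\nabla^N_x d^2$ bounded by compactness and $|\nabla_x d^2|=2d$ for $N=1$), and apply Theorem \ref{THM:Main} with $d\le \max_K d$ and $1/\sqrt t\le 1/t$ on the rest, handling $t\in[t_0,1]$ by smoothness. One small repair: ``closed because contained in a closed set'' is not a valid inference, but it is also unneeded, since the containment $K\cap\Cut(M)\subset\{(x,y): d(x,y)\ge \mathrm{inj}(x)\}$ together with the positive lower bound on the injectivity radius over the compact $\pi_1(K)$ (continuity and positivity of $\mathrm{inj}$, as used in Lemma \ref{Lem:Exit}) already bounds $d$ from below on $K\cap\Cut(M)$, so your choice of $c$ goes through without any claim about closedness of $\Cut(M)$.
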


\begin{remark}\label{Rmk:YDers}
We have stated Theorems \ref{THM:Main} and \ref{THM:NonCut} and Corollary \ref{Cor:Main} for $x$-derivatives, as emphasized, but they hold for $y$-derivatives as well, which is a central feature of the proofs. 
\end{remark}

In Section \ref{Sect:Background}, we describe the geometric context and notation in more detail, including the stochastic aspect. We also recall previous estimates of L\'eandre \cite{LeandreMax} and Bailleul-Norris \cite{IsmaelNorris}, proven in the generality of sub-Riemannian manifolds, which underlie our localization procedure. In Section \ref{Sect:Localizing}, we prove the basic localization results for heat kernel derivatives. In this section, we work in greater generality than just Riemannian manifolds, since the method applies to any diffusion satisfying certain small-time asymptotics, and thus is of independent interest. In particular, the results of this section apply on sub-Riemannian manifolds as well. In Section \ref{Sect:Riemannian}, we apply the general localization results to prove Theorems \ref{THM:Main} and \ref{THM:NonCut}. Finally, in Section \ref{Sect:Misc}, we discuss a few complementary issues. First, we note that the the upper bound given in Theorem \ref{THM:Main} is sharp in the power of $t$ for all $N$, which follows from a recently proven result in \cite{WithLudovic} giving the coefficient of $1/t^N$ in terms of the joint cumulant of geometrically relevant random variables. We illustrate this in the most important case, that of $N=2$, by treating the explicit example of the circle ``by hand.'' Second, we discuss the general (Riemannian) non-compact case, in which no symmetry assumptions are placed on $\Delta+Z$. In this case, while one can still localize $y$-derivatives, it is unclear whether or not the corresponding result for $x$-derivatives holds. This is further related to allowing $\Delta$ to also include a (smooth) potential. Finally, we observe that in the sub-Riemannian situation, even for a symmetric operator on a compact manifold, no estimate of the type \eqref{Eqn:LogDer} is known, because uniform control of the heat kernel, at the level of precision of \eqref{Eqn:LogDer}, near the diagonal is currently unavailable. This is essentially because the diagonal, on a true sub-Riemannian manifold (that is, one which is not Riemannian) is always non-strictly abnormal, and behavior of the heat kernel at abnormals is poorly understood.

As alluded to above, many of the localization ideas we employ also apply to sub-Riemannian manifolds, and this direction is pursued in \cite{WithLudovic}. Moreover, we make use of existing estimates on sub-Riemannian manifolds, though often only because they accommodate the case when $Z\neq 0$ without additional justification. For example, \eqref{Eqn:LogDer} on a compact $M$ for non-zero $Z$ is established in \cite{WithLudovic}. Nonetheless, the eventual results in the Riemannian case are significantly stronger, admit shorter proofs, and draw from different literature, making it best to treat them separately from the sub-Riemannian case. For those wishing to avoid any reference to sub-Riemannian results, we point out that the necessary estimates to prove a weaker version of Theorem \ref{THM:Main}, following the approach in this paper, were essentially already present in the 90s literature on Riemannian manifolds, as discussed in Remark \ref{Rmk:Elton}.


\section{Riemannian background}\label{Sect:Background}

For a (possibly incomplete) Riemannian manifold $M$, we let $\Delta_{LB}$ be the Laplace-Beltrami operator, and we consider the diffusion operator $\Delta=\Delta_{LB}+Z$ where $Z$ is a smooth vector field. We let $X_t$ be the diffusion generated by $\Delta$; in particular, $X_t$ is a Brownian motion (run at $\sqrt{2}$ times the normal speed) with drift. Of course, it is possible that $X_t$ explodes in finite time, and we understand that the process is killed upon explosion.

If we let $\mu$ be the Riemannian volume measure, then $X_t$ has a smooth transition function $p_t(x,y)\in C^{\infty}\lp (0,\infty)\times M\times M\rp$ with respect to $\mu$, that is, for any Borel set $A\subset M$
\[
\Prob\lp X_t\in A | X_0=x\rp = \int_{y\in A} p_t(x,y) \, d\mu(y) .
\]
(Note that the integral over all of $M$ is 1 if and only if the process does not explode, otherwise the integral will be strictly less than 1.) This transition function is also the associated (minimal) heat kernel for $\Delta$ (or fundamental solution to the heat equation), meaning that
\[
\partial_t p_t(x,y) = \Delta_x p_t(x,y) \quad\text{and}\quad \lim_{t\searrow 0} p_t(x,y)= \delta_x(y)
\]
where $\Delta_x$ denotes $\Delta$ acting on the $x$-variable (that is, on the first spatial variable) and $\delta_x$ is the usual point mass at $x$. One consequence is that, for any smooth, compactly supported function $\phi$ on $M$, the corresponding solution to the heat equation with initial values given by $\phi$ is $\phi_t(x) = \int_{y\in M} p_t(x,y)\phi(y)\, d\mu(y)$. (See \cite{McKean} and \cite{Dodziuk} for classical treatments of the existence and basic properties of the heat kernel on incomplete Riemannian manifolds, and \cite{Grigoryan} for a modern survey in the context of weighted Riemannian manifolds.) 

Note that the right-hand side of \eqref{Eqn:LogDer} is bounded from below by a positive constant for $(t,x,y)$ in any compact subset of $(0,\infty)\times M\times M$, so, for any $t_0\in (0,1)$ the estimate holds on any compact subset of $M\times M$ for all $t\in [t_0,1]$ simply by smoothness of the heat kernel. Thus establishing \eqref{Eqn:LogDer} for $t\in(0,1]$ is really a matter of the small-time asymptotics of the heat kernel. Said differently, once we have shown \eqref{Eqn:LogDer} holds uniformly for $t\in (0,t_0)$, we can extend it to $t\in (0,1]$ at the cost of possibly adjusting the constants $C_N$. Our basic approach is to combine a localization result for the heat kernel itself with a global result on $y$-derivatives of the heat kernel in order to localize these derivatives. To provide context for the argument, we first  introduce the necessary notation and describe the known estimates in the Riemannian case.

We will frequently restrict or extend the domain of the heat kernel, and for any open set $U$, we let $p^U_t(x,y)$ be the heat kernel on $U$ (which should be understood with Dirichlet boundary conditions-- that is, it is the transition density for the process killed upon explosion, or equivalently, upon the first exit from $U$). Note that since we are not assuming that our Riemannian manifolds are complete, there is no real distinction between a Riemannian manifold and an open subset of a Riemannian manifold. For a closed set $A$, we let $p_t(x,A,y) = p_t(x,y)-p^{A^c}_t(x,y)$, where $p^{A^c}_t(x,y)$ is extended to be zero for either $x$ or $y$ inside of $A$ (that is, outside of $A^c$). In particular, $p_t(x,A,y)$ gives the contribution to $p_t(x,y)$ from paths that pass through $A$. Further, we define
\[
d(x, A, y) = \inf\{d(x, z) + d(z, y) : z \in A\},
\]
which gives the distance from $x$ to $y$ via paths that pass through $A$. We will also consider the heat kernel with respect to different reference measures, and we will write $p^{\nu}_t(x,y)$ for the heat kernel with respect to a smooth volume measure $\nu$ (meaning a measure with a smooth, positive density in any system of local cooridnates) when there is a chance of confusion. This means that, for any Borel set $A\subset M$
\[
\Prob\lp X_t\in A | X_0=x\rp = \int_{y\in A} p^{\nu}_t(x,y) \, d\nu(y) ,
\]
and thus $d\nu = \frac{d \nu}{d \mu}d\mu$ for a smooth, positive Radon-Nikodym derivative $\frac{d \nu}{d \mu}$.

Assume that $M$ a possibly incomplete Riemannian manifold and $\Delta$ is symmetric, with respect to some smooth reference volume $\nu$. Then Theorem 1.1 of Bailleul-Norris \cite{IsmaelNorris} gives that (here the heat kernel is understood with respect to $\nu$)
\begin{equation}\label{Eqn:BN-A}
\limsup_{t\searrow 0} 4t \log p_t(x,A,y) \leq -d^2(x,A,y)
\end{equation}
uniformly for $x$ and $y$ in any compact subset of $M\setminus \partial A$. Further, under the same assumptions, Theorem 1.2 of \cite{IsmaelNorris} says that
\begin{equation}\label{Eqn:BN-Limit}
\lim_{t\searrow 0} 4t \log p_t(x,y) = -d^2(x,y)
\end{equation}
uniformly for $(x,y)$ in any compact subset of $M\times M$. 

Next, consider a Riemannian metric on $\bR^n$ given by a global orthonormal frame $Z_1,\ldots, Z_n$, where the $Z_i$ are smooth, bounded vector fields with bounded derivatives of all orders. Further, suppose that if we write $\Delta$ as $\sum_{i=1}^n Z_i^2 +Z_0$, then $Z_0$ is also a smooth, bounded vector field with bounded derivatives of all orders. (We don't pursue the question of when a metric admits such a representation, since eventually we localize the following estimate to a small enough neighborhood of a point.) In this case, L\'eandre \cite{LeandreMax} proves that, for any multi-index $\alpha$,
\begin{equation}\label{Eqn:LeandreCoarse2}
\limsup_{t\searrow 0} 4t \log \lp \lab \partial_y^{\alpha}p_t(x,y)  \rab \rp \leq - d^2(x,y) ,
\end{equation}
uniformly for $(x,y)$ in any compact subset of $\bR^n\times \bR^n$. Here $d(x,y)$ is the Riemannian distance, the partial derivatives are understood with respect to the standard Cartesian coordinates, and the result holds for any smooth reference volume on $\bR^n$ (which is clear via the same logic as in Lemma \ref{Lem:Constants} below). Note that this result doesn't require any symmetry of $\Delta$.

We continue with a simple consequence of stochastic calculus for exit times of balls on $M$.

\begin{lemma}\label{Lem:Exit}
Let $A$ be a compact subset of a (possibly incomplete) Riemannian manifold $M$, let $a$ be a positive constant, and let $\sigma_a$ be the first exit time of the diffusion $X_t$ (determined by having infinitesimal generator $\Delta$) from the open ball of radius $a$ around $X_0=x$ (that is, the first time the diffusion moves a distance $a$ from its starting point, which never happens if the diffusion explodes first). Then there exists $T>0$ such that 
\[
\Prob \lp \sigma_a <T | X_0=x \rp <\frac{1}{2}
\]
for any $x\in A$.
\end{lemma}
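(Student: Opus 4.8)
This is a soft exit-time estimate, and the plan is to run the classical argument — Dynkin's formula applied to the squared Riemannian distance from the starting point — while being careful that every geometric quantity that appears is controlled uniformly in $x\in A$, despite $M$ being incomplete.

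First I would reduce to the case that $a$ is small. Since $A$ is compact and $M$ is locally compact, $A$ has a relatively compact open neighborhood $W$; set $a_0=d(A,M\setminus W)>0$ (if $W=M$ then $M$ is compact, hence complete, and the statement is classical). After shrinking $W$ we may also assume $a_0$ is smaller than the infimum over the compact set $\overline W$ of the injectivity radius, so that for every $x\in A$ and every $a\le a_0$ the ball $B(x,a)$ is a normal coordinate ball with compact closure contained in $W$, on which $r_x^2:=d(x,\cdot)^2$ is smooth (indeed on a slightly larger ball still inside $W$). Exit times are monotone in the radius — precisely, $\{\sigma_a<s\}\subseteq\{\sigma_{a'}<s\}$ whenever $a'\le a$, by path-continuity up to the explosion time together with the intermediate value theorem — so a value of $T$ that works with $a$ replaced by $\min(a,a_0)$ works for $a$ itself. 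Hence we may assume $a\le a_0$.

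The one substantive point is a bound $|\Delta(r_x^2)|\le C_1$ on $\overline{B(x,a)}$ with $C_1$ independent of $x\in A$. The Laplace--Beltrami part $\Delta_{LB}(r_x^2)$ extends continuously across $x$, with value $2n$ there, as is visible in normal coordinates where $r_x^2=\sum u_i^2$; and $|Z(r_x^2)|=2r_x|\langle Z,\nabla r_x\rangle|\le 2a\sup_{\overline W}|Z|$. Thus $\Delta(r_x^2)$ depends continuously on $(x,z)$ jointly, and the set $\{(x,z):x\in A,\ d(x,z)\le a\}$ is compact, so the supremum $C_1$ is finite. This is precisely where incompleteness is handled: on a complete manifold one could simply invoke Laplacian comparison on $\overline{B(x,a)}$ from a lower Ricci bound, but in the incomplete setting $\overline{B(x,a)}$ need not be compact, which is why the reduction to $a\le a_0$ — forcing $\overline{B(x,a)}\subseteq\overline W$ — must come first; once inside the fixed compact $\overline W$, no curvature hypothesis is needed.

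Finally I would apply Dynkin's formula. Since $X_t$ solves the martingale problem for $\Delta$ and $r_x^2$ is smooth on a neighborhood of $\overline{B(x,a)}$ (extend it by a compactly supported cutoff equal to $1$ there, which is irrelevant before $\sigma_a$), the process $r_x(X_{t\wedge\sigma_a})^2-\int_0^{t\wedge\sigma_a}(\Delta r_x^2)(X_s)\,ds$ is a martingale vanishing at $t=0$, so $\E[r_x(X_{t\wedge\sigma_a})^2]\le C_1 t$. On the event $\{\sigma_a\le t\}$ path-continuity forces $r_x(X_{\sigma_a})=a$ (the diffusion has not exploded before $\sigma_a$ there), hence $a^2\,\Prob(\sigma_a\le t)\le C_1 t$; taking $T=a^2/(4C_1)$ gives $\Prob(\sigma_a<T)\le 1/4<1/2$ for all $x\in A$, and undoing the first-step reduction completes the proof. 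The only delicate ingredient, as indicated, is the uniform bound on $\Delta(r_x^2)$, and the reduction to small $a$ is designed exactly so that it follows from compactness alone.
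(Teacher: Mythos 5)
Your argument is correct and is essentially the paper's proof: reduce to small $a$ so that the closed balls lie in a fixed compact set below the injectivity radius, apply It\^o/Dynkin to the squared distance stopped at $\sigma_a$, and bound the drift $\Delta(r_x^2)$ uniformly in $x\in A$ by smoothness and compactness. The only (harmless) difference is that you dispose of the martingale term by taking expectations in Dynkin's formula, whereas the paper controls it via its quadratic variation and Chebyshev's inequality.
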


\begin{proof}
Since $\sigma_a$ is non-decreasing in $a$, without loss of generality we can assume that $a$ is small enough so that an $a$-neighborhood of $A$, briefly $A^a$, is relatively compact, and that $a$ is less than the injectivity radius at all $x\in A$. This follows from the fact that the injectivity radius is continuous and everywhere positive, and thus is bounded from below on a compact. (The continuity of the injectivity radius appears widely known; for example, it is asserted without proof in \cite{Chavel}. For a recent proof, one can see Section 10.8 of \cite{Boumal}.) Thus the squared distance function is smooth on $A\times A^a$.

In particular, for any $x\in A$, let $a$ be fixed as above, and let $r$ be the distance from $x$, and $r_t=r\lp X_t\rp$ be the usual radial process, where $X_0=x$ so that $r_0=0$. Then It\^o's lemma implies there exists some (standard one-dimensional) Brownian motion $W_t$ such that $r_t^2$ satisfies the SDE (written in integral form)
\[
r^2_{t\wedge\sigma_a} = \int_0^{t\wedge\sigma_a} 2r_s \, dW_s +\int_0^{t\wedge\sigma_a} \frac{1}{2}\Lap\lp r^2_s\rp \,ds .
\]

Now by smoothness of the squared distance, there exists some $\alpha>0$ such that the last integral on the right-hand side is bounded from above by $\alpha\lp t\wedge\sigma_a\rp$, and thus also by $\alpha t$, uniformly in $x$. (Indeed, it is well known that $\alpha$ can be given in terms of a lower bound on Ricci curvature and an upper bound on the length of $Z$ over $A^a$.) Moreover, the first integral on the right-hand side is a martingale started from 0, and its quadratic variation is bounded from above by $4a^2 t$, uniformly in $x$. Thus, by Chebyshev's inequality, it converges to 0 in probability as $t\searrow 0$, uniformly in $x$. On the other hand, we have $a^2\Prob^x\lp \sigma_a<t\rp \leq r^2_{t\wedge\sigma_a}$. Putting this together, we have 
\[
\Prob^x\lp \sigma_a<t\rp \leq \frac{1}{a^2}\lp  \int_0^{t\wedge\sigma_a} 2r_s \, dW_s +\alpha t \rp ,
\]
where the right-hand side goes to 0 in probability with $t$ (for any fixed $a>0$), uniformly over $x\in A$. The lemma follows.
\end{proof}


\section{Localizing derivatives of the heat kernel}\label{Sect:Localizing}

For this section, we work in a more general situation than the rest of the paper. In particular, let $M$ be a smooth manifold (of dimension $n$) with a smooth volume $\nu$. We suppose that we have a diffusion $X_t$ on $M$ (that may explode in finite time with positive probability) with a smooth transition density $p_t(x,y)$ with respect to $\nu$; that is, started from any $x\in M$, $X_t$ almost surely has continuous paths, satisfies the strong Markov property (for stopping times with respect to the filtration it generates), and for any Borel $A\subset M$,
\[
\Prob\lp X_t\in A | X_0=x\rp = \int_{y\in A} p_t(x,y) \, d\nu(y) .
\]
We also assume that $d(\cdot,\cdot)$ is a metric on $M$ which is compatible with the smooth manifold topology (so it is also continuous). Finally, we assume that the diffusion satisfies the conclusion of Lemma \ref{Lem:Exit} (where balls are understood with respect to the metric $d$); that is, we assume that the probability of the diffusion from $x$ leaving an open ball of radius $a>0$ around $x$ before time $T$ can be made less than $1/2$ by choosing $T$ small enough, uniformly over all $x$ in a compact. (Recall that the hitting time of a closed set is a stopping time for the filtration generated by $X_t$.) The point is that the basic localization of $y$-derivatives can be established for such a $p_t(x,y)$ using only the type of small-time estimates described in the previous section, and no other aspects of the Riemannian structure.

We begin with some preliminary observations, essentially amounting to smooth calculus, which we collect in the following lemma. For any system of (smooth) coordinates $u_1,\ldots, u_n$ on an open set $U\subset M$ with compact closure, we say that the coordinates are extendable if they can be extended to a neighborhood of the closure of $U$. Note that if we have any two such extendable coordinate systems on $U$, the transition function between them is smooth on a neighborhood of the closure of $U$.

\begin{lemma}\label{Lem:Constants}
Let $M$ and $p_t(x,y)$ be as above. Let $B\subset M$ be an open ball with compact closure that admits (smooth) coordinates $u_1,\ldots, u_n$ that extend to some neighborhood of $\overline{B}$, and let $K\subset M\times M$ be compact, such that $\pi_2(K)$, the projection of $K$ onto the second factor (respectively, $\pi_1(K)$, the projection onto the first factor) is contained in $B$. (Below we understand partial derivatives $\partial^{\alpha}$ on $B$ to be with respect to the $u_i$.)

Then there exists a sequence of positive constants $C_N$ such that, for any $N\geq 1$,
\begin{equation}\label{Eqn:LogDer-Cor}
\lab \partial^{\alpha} \log p_t^{\nu}(x,y)\rab \leq C_N\lb \frac{d(x,y)}{t} +\frac{1}{\sqrt{t}}\rb^N \quad\text{for any multi-index $\alpha$ of weight $N$,}
\end{equation}
uniformly for $(x,y)\in K$, with the derivatives acting in the $y$-variable (respectively, the $x$-variable), if and only if there exists a sequence of positive constants $D_N$ such that, for any $N\geq 1$,
\begin{equation}\label{Eqn:PDer}
\lab \partial^{\alpha} p_t^{\nu}(x,y)\rab \leq D_N\lb \frac{d(x,y)}{t} +\frac{1}{\sqrt{t}}\rb^N p^{\nu}_t(x,y) \quad\text{for any multi-index $\alpha$ of weight $N$,}
\end{equation}
uniformly for $(x,y)\in K$, with the derivatives acting in the $y$-variable (respectively, the $x$-variable).

Further, if either of \eqref{Eqn:LogDer-Cor} and \eqref{Eqn:PDer} hold (with the derivatives acting either on $x$ or on $y$) with respect to some smooth volume $\nu$, then it holds with respect to any smooth volume (that is, for $p_t^{\nu'}$ where $\nu'$ is another smooth volume) with possibly different sequences of constants $C_N$ and $D_N$.
\end{lemma}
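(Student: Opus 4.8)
The plan is to establish the equivalence of \eqref{Eqn:LogDer-Cor} and \eqref{Eqn:PDer} by the standard combinatorial bookkeeping relating derivatives of $\log p_t$ to derivatives of $p_t$ (the Faà di Bruno / Newton-identity type relations), keeping careful track of the size of each term in powers of $\theta_t := d(x,y)/t + 1/\sqrt t$, and then to deduce the independence on the reference volume from the Radon–Nikodym formula $d\nu = (d\nu/d\mu)\, d\mu$. I will treat only the case of $y$-derivatives; the $x$-derivative case is identical since none of the formal manipulations distinguish the two variables.

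For the direction \eqref{Eqn:PDer} $\Rightarrow$ \eqref{Eqn:LogDer-Cor}: writing $f = \log p_t$ so that $p_t = e^f$, for any multi-index $\alpha$ of weight $N$ the derivative $\partial^\alpha \log p_t$ is a universal polynomial (with rational coefficients) in the quantities $\partial^\beta p_t / p_t$ for $\beta \le \alpha$, where each monomial $\prod_j (\partial^{\beta_j} p_t / p_t)$ appearing has $\sum_j \beta_j = \alpha$, hence $\sum_j |\beta_j| = N$. By \eqref{Eqn:PDer} each factor $|\partial^{\beta_j} p_t / p_t|$ is bounded by $D_{|\beta_j|}\,\theta_t^{|\beta_j|}$, so each monomial is bounded by a constant times $\theta_t^N$, giving \eqref{Eqn:LogDer-Cor} with $C_N$ depending only on $N$ and on $D_1,\ldots,D_N$. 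For the converse \eqref{Eqn:LogDer-Cor} $\Rightarrow$ \eqref{Eqn:PDer}: since $p_t = e^{\log p_t}$, the same Faà di Bruno expansion gives $\partial^\alpha p_t = p_t \cdot (\text{polynomial in } \partial^\beta \log p_t,\ \beta \le \alpha)$, where again each monomial $\prod_j \partial^{\beta_j}\log p_t$ has $\sum_j |\beta_j| = N$; bounding each factor by $C_{|\beta_j|}\theta_t^{|\beta_j|}$ via \eqref{Eqn:LogDer-Cor} yields $|\partial^\alpha p_t| \le D_N \theta_t^N p_t$. Both directions are purely algebraic and use nothing about the diffusion beyond positivity of $p_t$ and the stated bounds; in particular no compactness of $K$ is needed here, only to know that for each fixed $t_0$ the relevant quantities are finite.

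For the volume-independence: if $\nu' = h\,\mu$ with $h$ smooth and positive, then (as recalled just before the lemma) $p_t^{\nu'}(x,y) = p_t^{\nu}(x,y)\cdot (h(y))^{-1}$ when derivatives act on $y$ — more precisely one has $p_t^{\nu'}(x,y) = p_t^{\mu}(x,y)/h(y)$ and similarly with $\nu$, so $p_t^{\nu'}$ and $p_t^{\nu}$ differ (in the $y$-variable) by multiplication by a fixed smooth positive function $g(y) := (d\nu/d\nu')(y)$, independent of $t$. Hence $\log p_t^{\nu'} = \log p_t^{\nu} + \log g$, and $\partial^\alpha \log p_t^{\nu'} = \partial^\alpha \log p_t^{\nu} + \partial^\alpha \log g$; since $g$ is smooth and positive on the compact $\overline B \supset \pi_2(K)$, the term $\partial^\alpha \log g$ is bounded uniformly on $K$ by a constant, which for $t \in (0,1]$ is dominated by $(1/\sqrt t)^N \le \theta_t^N$. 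This changes $C_N$ by a bounded amount, giving \eqref{Eqn:LogDer-Cor} for $p_t^{\nu'}$; the statement for \eqref{Eqn:PDer} then follows from the equivalence already proved, or directly since $|\partial^\alpha(g\,p_t^\nu)| \le \sum_{\beta\le\alpha}\binom{\alpha}{\beta}|\partial^\beta g|\,|\partial^{\alpha-\beta}p_t^\nu|$ and each term is controlled by $\theta_t^{|\alpha-\beta|} p_t^\nu \le \theta_t^N p_t^\nu$ up to constants, while $p_t^\nu$ and $p_t^{\nu'}$ are comparable on $K$.

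The only genuinely delicate point is cosmetic rather than mathematical: one must make sure the universal polynomials in the Faà di Bruno expansions indeed have the homogeneity property $\sum_j |\beta_j| = N$ claimed above — this is immediate from the chain rule applied to $e^{(\cdot)}$ (each differentiation either raises the order of an existing factor by one or introduces a new first-order factor, and in both cases the total weight goes up by exactly one), so no real obstacle arises. I expect no other difficulties; the lemma is, as the authors say, "smooth calculus."
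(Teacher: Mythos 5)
Your proposal is correct and follows essentially the same route as the paper: Fa\`a di Bruno in both directions (with the key homogeneity observation that the block weights sum to $N$), and then the change of reference volume via the identity $p_t^{\nu'}=p_t^{\nu}\,\frac{d\nu}{d\nu'}(y)$ together with smoothness, positivity, and compactness (your log-additive variant and your Leibniz-rule variant are both the paper's argument in slightly different clothing, with the same implicit use of $d(x,y)/t+1/\sqrt{t}\geq 1$ for $t\leq 1$). The only cosmetic difference is that the paper treats the $x$-derivative case of the volume change separately, noting it is even easier since the density factor depends only on $y$, which your argument also covers.
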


\begin{proof}
Assume \eqref{Eqn:LogDer-Cor} holds (for all $N$), with the partials on either $y$ or $x$. For convenience, let $\ell_t(x,y) = \log p_t^{\nu}(x,y)$. Then we have
\[
\partial^{\alpha}  p_t^{\nu}(x,y)= \partial^{n_1}\partial^{n_2}\cdots\partial^{n_N} e^{\ell_t(x,y)}
\]
where the $n_i$ correspond to the indices in $\alpha$, and $N=|\alpha|$. Here the partials are understood to act either all on $y$ or all on $x$. Fa\`a di Bruno's formula gives
\[
\partial^{\alpha}  p_t^{\nu}(x,y)= \sum_{\pi\in\Pi}e^{\ell_t(x,y)} \prod_{B\in\pi}\partial^B \ell_t(x,y)
= p^{\nu}_t(x,y) \sum_{\pi\in\Pi} \prod_{B\in\pi}\partial^B \ell_t(x,y)
\]
where the sum is over all partitions $\pi$ of $\{1,\ldots,N\}$, the product is over all blocks in the partition $\pi$, and $\partial^B$ is understood to mean $\partial^{n_{i_1}}\cdots\partial^{n_{i_{|B|}}}$ where $i_1,\ldots, i_{|B|}$ are the elements of $B$ and $|B|$ is the cardinality of the block $B$. Applying \eqref{Eqn:LogDer-Cor} and using that the sum of the $|B|$ over all $B\in\pi$ is necessarily $N$, we have
\[
\lab \partial^{\alpha}  p_t^{\nu}(x,y)\rab \leq
p^{\nu}_t(x,y) \sum_{\pi\in\Pi} \prod_{B\in\pi} C_{|B|}\lb \frac{d(x,y)}{t} +\frac{1}{\sqrt{t}}\rb^{|B|}
= p^{\nu}_t(x,y) \lb \frac{d(x,y)}{t} +\frac{1}{\sqrt{t}}\rb^N \lp \sum_{\pi\in\Pi} \prod_{B\in\pi} C_{|B|}\rp ,
\]
and thus \eqref{Eqn:PDer} holds with $D'_N= \sum_{\pi\in\Pi} \prod_{B\in\pi} C_{|B|}$.

The other direction is analogous. Assuming that \eqref{Eqn:PDer} holds (for all $N$), and using the same notation as above, we have
\[
\partial^{\alpha} \log p_t^{\nu}(x,y) =
\sum_{\pi\in\Pi} \frac{\lp-1\rp^{|\pi|-1}\lp|\pi|-1\rp!}{\lp p^{\nu}_t(x,y)\rp^{|\pi|}} \prod_{B\in\pi}\partial^B p^{\nu}_t(x,y) ,
\]
so that
\[\begin{split}
\lab\partial^{\alpha} \log p_t^{\nu}(x,y)\rab & \leq   
\sum_{\pi\in\Pi} \frac{\lp-1\rp^{|\pi|-1}\lp|\pi|-1\rp!}{\lp p^{\nu}_t(x,y)\rp^{|\pi|}} \prod_{B\in\pi}p^{\nu}_t(x,y) D_{|B|}\lb \frac{d(x,y)}{t} +\frac{1}{\sqrt{t}}\rb^{|B|} \\
&= \lb \frac{d(x,y)}{t} +\frac{1}{\sqrt{t}}\rb^N \lp \sum_{\pi\in\Pi} \lp-1\rp^{|\pi|-1}\lp|\pi|-1\rp!  \prod_{B\in\pi} D_{|B|}\rp .
\end{split}\]
Thus \eqref{Eqn:LogDer-Cor} holds with $C_N=\sum_{\pi\in\Pi} \lp-1\rp^{|\pi|-1}\lp|\pi|-1\rp!  \prod_{B\in\pi} D_{|B|}$. Again, the computation is valid with $\partial^{\alpha}$ acting on either coordinate.

Next, assume that one, and thus both, of \eqref{Eqn:LogDer-Cor} and \eqref{Eqn:PDer} hold with derivatives on the $y$-variable. Then if $\nu'$ is any other smooth volume, the Radon-Nikodym derivative $\frac{d \nu}{d \nu'}$ is smooth. The product rule gives
\[\begin{split}
\partial_y^{\alpha}  p_t^{\nu'}(x,y) =  \partial_y^{n_1}\partial_y^{n_2}\cdots\partial_y^{n_N} \lp p_t^{\nu}(x,y) \frac{d \nu}{d \nu'}(y) \rp
= \sum_{A\subset {1,\ldots,N}} \lp\partial^A_y p_t^{\nu}(x,y)\rp \lp \partial^{A^c}_y (y)\frac{d \nu}{d \nu'}\rp
\end{split}\]
where the sum is over all subsets of $\{1,\ldots, N\}$ and for each term, the partial derivatives are divided into $A$ and its complement (with $\partial_y^{\emptyset}f(y)=f(y)$). Since we work on a compact $K$, each factor $\partial^{A^c}_y (y)\frac{d \nu}{d \nu'}$ is bounded on $K$. Then using \eqref{Eqn:PDer} on each factor $\partial^A_y p_t^{\nu}(x,y)$, since $|A|\leq |\alpha|=N$, we see that \eqref{Eqn:PDer} holds for $\partial_y^{\alpha}  p_t^{\nu'}(x,y)$ with possibly different constants. By the above, \eqref{Eqn:LogDer-Cor} also holds for $\partial_y^{\alpha}  p_t^{\nu'}(x,y)$, with possibly different constants. Finally, if one, and thus both, of \eqref{Eqn:LogDer-Cor} and \eqref{Eqn:PDer} hold with derivatives on the $x$-variable, then 
\[
\partial_x^{\alpha}  p_t^{\nu'}(x,y) = \lp \partial_x^{\alpha}  p_t^{\nu}(x,y)\rp \frac{d \nu}{d \nu'}(y) ,
\]
and the desired result again follows by smoothness and compactness.
\end{proof}

Next, we show that estimates on the $y$-derivatives of the type in \eqref{Eqn:LeandreCoarse2} can be localized to a neighborhood of the endpoint $y$.

\begin{lemma}\label{Lem:NestedBalls}
For $M$ and $p_t(x,y)$ as described above, suppose that, for some $\eta>0$, $B$, $B'$ and $B''$ are concentric open balls of radii $\eta/2$, $(3/2)\eta$, and $(7/2)\eta$, respectively, and that $B''$ has compact closure. Suppose further that we have an extendable coordinate system on $B''$ and that, for any multi-index $\alpha$, we have
\begin{equation}\label{Eqn:LeandreAgain}
\limsup_{t\searrow 0} 4t \log \lp \lab \partial_y^{\alpha}p_t(x,y)  \rab \rp \leq - d^2(x,y) ,
\end{equation}
uniformly for $x$ and $y$ in $\overline{B''}$ (where the partial derivatives are understood with respect to this coordinate system). Then, for any multi-index $\alpha$,
\begin{equation}\label{Eqn:DerBound}
\limsup_{t\searrow 0} 4t \log \lp \lab \partial_y^{\alpha} p^{B''}_t(x,y)  \rab \rp \leq - d^2(x,y)
\end{equation}
uniformly over $(x,y)\in \overline{B'}\times B$.
\end{lemma}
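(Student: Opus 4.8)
\emph{Proof sketch.} The plan is to use the strong Markov decomposition of $p_t$ according to whether $X_t$ has exited $B''$, and to observe that, because $B$ and $\partial B''$ are a definite distance apart, the contribution of paths that have exited $B''$ is exponentially smaller than $\exp(-d^2(x,y)/4t)$ on $\overline{B'}\times B$, so it is harmless after taking $4t\log$ and letting $t\searrow 0$. First I would record the relevant distances; write $o$ for the common center of the three balls. For $(x,y)\in\overline{B'}\times B$ one has $d(x,y)\le\tfrac32\eta+\tfrac12\eta=2\eta$, hence $d^2(x,y)\le 4\eta^2<9\eta^2$; for $z\in\partial B''$ and $y\in\overline B$ one has $d(z,y)\ge\tfrac72\eta-\tfrac12\eta=3\eta$; and for $z\in\partial B''$ and any $\xi$ in a small enough neighborhood of $B$, still $d(z,\xi)\ge\tfrac72\eta-\tfrac32\eta=2\eta$.

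Next, let $\tau$ be the first exit time of $X_t$ from $B''$ (with $\tau=\infty$ if $X_t$ explodes inside $B''$, so that $X_\tau\in\partial B''$ on $\{\tau<\infty\}$). The strong Markov property gives, for $x\in B''$,
\[
p_t(x,y)=p_t^{B''}(x,y)+\E^x\!\big[\Ind_{\{\tau\le t\}}\,p_{t-\tau}(X_\tau,y)\big],
\]
and differentiating $|\alpha|$ times in $y$ and applying the triangle inequality,
\[
\big|\partial_y^\alpha p_t^{B''}(x,y)\big|\le\big|\partial_y^\alpha p_t(x,y)\big|+\E^x\!\big[\Ind_{\{\tau\le t\}}\,\big|\partial_y^\alpha p_{t-\tau}(X_\tau,y)\big|\big].
\]
The interchange of $\partial_y^\alpha$ with $\E^x$ is the one step needing care: it follows from dominated convergence applied to difference quotients, using that for $z\in\partial B''$ and $\xi$ near a fixed $y_0\in B$ we have $d(z,\xi)\ge 2\eta>0$, so that \eqref{Eqn:LeandreAgain}, applied for the multi-indices $\le\alpha$ (this is why we need it for all multi-indices), bounds $|\partial_y^\beta p_s(z,\xi)|$ by a fixed constant uniformly over such $z,\xi$ and all $s\in(0,t_0]$.

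Finally I would bound the two terms. Fix $\eps\in(0,5\eta^2)$; by \eqref{Eqn:LeandreAgain} there is $t_0>0$ with $|\partial_y^\alpha p_s(a,b)|\le\exp\!\big((-d^2(a,b)+\eps)/4s\big)$ for all $s\in(0,t_0]$ and $a,b\in\overline{B''}$. For $(x,y)\in\overline{B'}\times B$ and $t\le t_0$ the first term is then $\le\exp\!\big((-d^2(x,y)+\eps)/4t\big)$. For the second, on $\{\tau\le t\}$ we have $X_\tau\in\partial B''$, so $d(X_\tau,y)\ge 3\eta$, and since $-9\eta^2+\eps<0$ and $t-\tau\le t$,
\[
\big|\partial_y^\alpha p_{t-\tau}(X_\tau,y)\big|\le\exp\!\Big(\frac{-9\eta^2+\eps}{4(t-\tau)}\Big)\le\exp\!\Big(\frac{-9\eta^2+\eps}{4t}\Big)\le\exp\!\Big(\frac{-d^2(x,y)}{4t}\Big),
\]
the last inequality because $d^2(x,y)\le 4\eta^2\le 9\eta^2-\eps$; bounding the probability by $1$, the second term is $\le\exp(-d^2(x,y)/4t)$. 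Adding, $|\partial_y^\alpha p_t^{B''}(x,y)|\le 2\exp\!\big((-d^2(x,y)+\eps)/4t\big)$, so $4t\log|\partial_y^\alpha p_t^{B''}(x,y)|\le 4t\log2-d^2(x,y)+\eps$; letting $t\searrow0$ and then $\eps\searrow0$ yields \eqref{Eqn:DerBound}, and since all the estimates are uniform in $(x,y)\in\overline{B'}\times B$, so is the conclusion. The main obstacle is the justification of differentiating under the expectation; everything else is bookkeeping with the distance inequalities, and notably the argument uses only the strong Markov property, the estimate \eqref{Eqn:LeandreAgain}, and the distance gap between $\partial B''$ and $B'$, and not Lemma~\ref{Lem:Exit}.
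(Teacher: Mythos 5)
Your proposal is correct and follows essentially the same route as the paper's proof: the same strong Markov decomposition of $p_t$ at the first exit from $B''$ (your expectation over the exit time is the paper's integral against the spacetime hitting measure $\mu^x_h$), the same interchange of $\partial_y^\alpha$ with the integral, and the same use of \eqref{Eqn:LeandreAgain} together with the gap $d(\partial B'',B)\geq 3\eta > 2\eta \geq d(x,y)$ to make the exited contribution exponentially negligible. Your dominated-convergence justification of differentiating under the expectation is, if anything, spelled out more explicitly than in the paper, and your observation that Lemma \ref{Lem:Exit} is not needed here matches the paper's argument.
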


\begin{proof}

For $(x,y)\in \overline{B'}\times B$ we let $\mu^x_h$ be the (spacetime) hitting measure of $\partial B''$ for the diffusion started from $x$; that is, $\mu^x_h$ is a (sub-) probability measure on $\partial B''\times (0,\infty)$ that describes the first place and time the diffusion from $x$ hits $\partial B''$. Then the strong Markov property implies we have the path decomposition
\begin{equation}\label{Eqn:Decomp}
 p_t^{B''}(x,y) = p_t(x,y) - \int_{(z,\tau)\in \partial B''\times (0,t)} p_{t-\tau}(z,y) \, d\mu^x_h .
\end{equation}
Turning our attention to derivatives, since $p_t$ is smooth in $y$ for all positive times, we have
\[
\partial_y^i \int_{(z,\tau)\in \partial B''\times (0,t)} p_{t-\tau}(z,y) \, d\mu^x_h =
\int_{(z,\tau)\in \partial B''\times (0,t)} \partial_y^i p_{t-\tau}(z,y) \, d\mu^x_h 
\]
for any $i$. Iterating this, we have that, for any multi-index $\alpha$,
\begin{equation}\label{Eqn:DerInside}
\partial_y^{\alpha} \int_{(z,\tau)\in \partial B''\times (0,t)} p_{t-\tau}(z,y) \, d\mu^x_h =
\int_{(z,\tau)\in \partial B''\times (0,t)} \partial_y^{\alpha} p_{t-\tau}(z,y) \, d\mu^x_h ,
\end{equation}
for all $y\in B$ and any $x$.

From \eqref{Eqn:LeandreAgain} and the construction of $B''$, for any $\delta>0$, the integrand in the right-hand side of \eqref{Eqn:DerInside} has absolute value bounded by $e^{-(3\eta)^2+\delta}$ once $t$ is small enough, uniformly in $(z,y)\in \partial B''\times B$. Using this, and  applying $\partial^{\alpha}_y$ to both sides of \eqref{Eqn:Decomp}, we find that, for any multi-index $\alpha$,
\[
\lab \partial_y^{\alpha} p_t^{B''}(x,y) - \partial_y^{\alpha} p_t(x,y) \rab \leq e^{-(3\eta)^2+\delta} 
\]
for all $(x,y)\in \overline{B'}\times B$, if $t$ is small enough. On the other hand, $\partial_y^{\alpha} p_t(x,y)$ term satisfies \eqref{Eqn:LeandreAgain} with $d^2(x,y)\leq (2\eta)^2$ for $(x,y)\in  \overline{B'}\times B$, so its contribution to the bound on $\partial_y^{\alpha} p_t^{B''}(x,y)$ is more significant by an exponential factor (which is the underlying idea of localizing the estimate). At any rate, it follows that 
\[
\limsup_{t\searrow 0} 4t \log \lp \lab \partial_y^{\alpha} p^{B''}_t(x,y)  \rab \rp \leq - d^2(x,y)
\]
uniformly over $(x,y)\in \overline{B'}\times B$, and the result is proven.
\end{proof}

Finally, we show how to combine this local result with a localization result for the heat kernel itself to localize derivatives.

\begin{lemma}\label{Lem:Uc}
In the situation described above, suppose that, $K_0\subset U_0\subset K_1\subset U_1\subset M$ where $K_0$ and $K_1$ are compact and $U_0$ and $U_1$ are open with compact closure, and suppose that, for some $\eps>0$, the heat kernel on $M$ satisfies the estimate
\begin{equation}\label{Eqn:ThroughUc}
\limsup_{t\searrow 0} 4t \log p_t\lp x,U_1^c,y\rp \leq -\lp d(x,y)+\eps\rp ^2
\end{equation}
uniformly for $x$ and $y$ in $K_1$.

Suppose further that there exists $\eta>0$ and $y_0\in K_0$, such that, if we consider the concentric balls $B, B', B''$ centered at $y_0$ (as in Lemma \ref{Lem:NestedBalls}), the closure of $B''$ is contained in $U_0$, and the conclusion of Lemma \ref{Lem:NestedBalls} holds (including the existence of an appropriate extendable coordinate system). Then for any multi-index $\alpha$,

\begin{equation}\label{Eqn:WithEta}
\limsup_{t\searrow 0}  4t \log\lp  \lab\partial_y^{\alpha} p_t(x,U_1^c,y)\rab \rp \leq \lp d(x,y) +\eps-3\eta \rp^2
\end{equation}
uniformly over $x\in K_0$ and $y\in B$.
\end{lemma}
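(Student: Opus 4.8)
The plan is to split every path from $x$ to $y$ that meets $U_1^c$ at two stopping times: the first hitting time $S$ of the closed set $U_1^c$, and the first hitting time $T>S$ of $\overline{B''}$. Since $\overline{B''}\subset U_0\subset K_1\subset U_1$, the sphere $\partial B''$ lies a positive distance from $U_1^c$, while $y\in B\subset\operatorname{int}\overline{B''}$, so a path meeting $U_1^c$ and ending at $y$ must cross $\partial B''$ after $S$; hence $S<T\le t$, $X_S\in\partial U_1$, $X_T\in\partial B''$, and the decomposition is an exact identity. Because the portion of the path after $T$ is unconstrained, differentiating in $y$ produces a \emph{genuine} heat-kernel factor $\partial_y^\alpha p_{t-T}(X_T,y)$ with $X_T\in\partial B''$ at distance $\ge (7/2)\eta-(1/2)\eta=3\eta$ from $y\in B$; this is precisely the form into which the hypotheses \eqref{Eqn:LeandreAgain} and \eqref{Eqn:ThroughUc} can be fed. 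Shrinking $\eta$ (keeping the radii in ratio $1:3:7$) only relaxes the hypotheses, so we may assume $3\eta\le 2\eps$, in which case it suffices to prove the stronger bound $\limsup_{t\searrow 0}4t\log(|\partial_y^\alpha p_t(x,U_1^c,y)|)\le -(d(x,y)+\eps)^2$, uniformly over $x\in K_0$, $y\in B$ and $|\alpha|\le N$ for each fixed $N$, since then $-(d(x,y)+\eps)^2\le -(d(x,y)+\eps-3\eta)^2$ gives \eqref{Eqn:WithEta} for all orders.

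Writing $\Theta^x$ for the (sub-probability) law of $(X_T,T)$ on $\partial B''\times(0,\infty)$, the strong Markov property at $S$ and then at $T$ gives
\[
p_t(x,U_1^c,y)=\int_{\partial B''\times(0,t)} p_{t-r}(\xi,y)\,d\Theta^x(\xi,r),\qquad \partial_y^\alpha p_t(x,U_1^c,y)=\int_{\partial B''\times(0,t)} \partial_y^\alpha p_{t-r}(\xi,y)\,d\Theta^x(\xi,r),
\]
the interchange being licit because for $\xi\in\partial B''$, $y\in B$ the factor $|\partial_y^\alpha p_{t-r}(\xi,y)|$ stays bounded — indeed vanishes as $r\to t$ — by the estimate below, while $\Theta^x$ has finite mass. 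Two uniform ingredients are then used: (i) since $\xi,y\in\overline{B''}$, hypothesis \eqref{Eqn:LeandreAgain} gives, for each $\delta>0$, a $\rho_0>0$ with $|\partial_y^\alpha p_\rho(\xi,y)|\le e^{-(d^2(\xi,y)-\delta)/(4\rho)}$ for $0<\rho<\rho_0$ (and $\le C_0$ otherwise); (ii) the spacetime hitting measure $\Theta^x$ is exponentially small at small times, in the sense that for each $\delta>0$ and $\beta$ small, $\Theta^x(\{(\xi',r'):d(\xi',\xi)<\beta,\ r-T_\beta<r'\le r\})\le e^{-((d(x,\xi)+\eps)^2-\delta)/(4r)}$ uniformly in $\xi\in\partial B''$, $x\in K_0$, $r$ small, where $T_\beta$ is the time from Lemma~\ref{Lem:Exit} for radius-$\beta$ balls near $\overline{B''}$. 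Estimate (ii) follows from a standard comparison: on that event $X_{r'}=\xi'$ with $r-r'<T_\beta$, so by Lemma~\ref{Lem:Exit} restarted from $\xi'$ one has $X_r\in N_{2\beta}(\xi)$ with probability $\ge1/2$, whence the mass is $\le 2\Prob^x(X_r\in N_{2\beta}(\xi),\,S<r)=2\int_{N_{2\beta}(\xi)}p_r(x,U_1^c,z)\,d\nu(z)$; since $N_{2\beta}(\xi)\subset K_1$ for small $\beta$, \eqref{Eqn:ThroughUc} bounds this by $(\mathrm{const})\,\beta^n e^{-((d(x,\xi)+\eps-C\beta)^2-\delta/2)/(4r)}$, and one absorbs the prefactor and the $O(\beta)$ into $\delta$.

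To conclude, fix a large integer $K$, choose $\beta=\beta(t)\asymp\sqrt t$ with $t/K\le T_{\beta(t)}$, and partition $\partial B''\times(0,t)$ into the cells $N_\beta(\xi_k)\times((j{-}1)t/K,\,jt/K]$ — polynomially many in $1/t$. On the cell based at $(\xi_k,jt/K)$, estimates (i) (with $\rho=t-r'\le(K{-}j{+}1)t/K$) and (ii) (with $r=jt/K$) bound its contribution to the derivative integral by $(\mathrm{const})\beta^n\exp\!\bigl(-\tfrac{K}{4t}\bigl[\tfrac{d^2(\xi_k,y)-\delta'}{K-j+1}+\tfrac{(d(x,\xi_k)+\eps-\delta')^2}{j}\bigr]\bigr)$. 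By $\tfrac{a^2}{m}+\tfrac{b^2}{n}\ge\tfrac{(a+b)^2}{m+n}$ together with the triangle inequality $d(x,\xi_k)+d(\xi_k,y)\ge d(x,y)$, the bracketed quantity is $\ge\tfrac{(d(x,y)+\eps-\delta'')^2}{K+1}$. Summing over the cells, then letting $t\searrow0$ (so $\beta\to0$), then $\delta\searrow0$, then $K\to\infty$, we obtain $\limsup_{t\searrow0}4t\log(|\partial_y^\alpha p_t(x,U_1^c,y)|)\le-(d(x,y)+\eps)^2$, uniformly as claimed, which gives \eqref{Eqn:WithEta}.

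The step I expect to be the main obstacle is ingredient (ii) — turning the heat-kernel bound \eqref{Eqn:ThroughUc} into a uniform bound on the hitting measure of the $\nu$-null sphere $\partial B''$. This forces the use of thin spacetime cells of radius $\asymp\sqrt t$, and it is precisely here that Lemma~\ref{Lem:Exit} is needed, to control the escape of the diffusion between hitting $\partial B''$ and the reference time $r$ so that the hitting mass can be compared with a heat content. Its non-sharp constant is harmless (it affects only a polynomial prefactor), and the factor $K/(K+1)$, removed in the limit, merely reflects that a finite time-partition only approximately optimizes the split of the path between its outgoing and returning legs; the differentiation under the integral is a minor point, handled by the $d(\xi,y)\ge3\eta>0$ buffer.
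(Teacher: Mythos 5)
Your decomposition is genuinely different from the paper's (a single first--entrance split at $\partial B''$ with the \emph{full} kernel $p_{t-r}(\xi,y)$ as the last leg, versus the paper's sum over successive $\partial B'\!\to\!\partial B''$ crossings whose last leg is the Dirichlet kernel $p^{B''}_{t-\tau}(z,y)$ from $z\in\partial B'$), and your ingredient (ii) --- converting \eqref{Eqn:ThroughUc} into a spacetime bound on the hitting measure of $\partial B''$ via Lemma \ref{Lem:Exit} and small cells --- is sound in spirit. But ingredient (i) is a genuine gap: you bound $|\partial_y^\alpha p_{t-r}(\xi,y)|$ for $\xi\in\partial B''$, $y\in B$ by invoking \eqref{Eqn:LeandreAgain}, which is the \emph{hypothesis} of Lemma \ref{Lem:NestedBalls}, not its conclusion. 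Lemma \ref{Lem:Uc} only assumes the conclusion \eqref{Eqn:DerBound}, i.e.\ a bound on the Dirichlet kernel $p^{B''}$, and in the intended application this is all that is available: \eqref{Eqn:LeandreAgain} is obtained only after embedding $B''$ in an auxiliary $\bR^n$-type manifold, and what transfers back to the possibly incomplete $M$ is precisely the Dirichlet-kernel bound, since $p^{B''}$ depends only on $B''$. A derivative bound for the full kernel of $M$ at small times is essentially what the whole localization is trying to prove, so using it here is circular. Your first-entrance split cannot be patched by substituting the Dirichlet bound ($p^{B''}_{t-r}(\xi,\cdot)\equiv 0$ for $\xi\in\partial B''$, and the post-$T$ path may leave $B''$ and return); making the last leg a Dirichlet kernel forces a last-exit-type decomposition, and since last-exit times are not stopping times one is led back to the paper's iterated hitting times $\tau_i,\tau_i'$ with the geometric-series control of their masses.

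Two smaller points. First, your reduction ``shrink $\eta$ so that $3\eta\le 2\eps$'' is not clearly legitimate from the stated hypotheses: the assumption is that the conclusion of Lemma \ref{Lem:NestedBalls} holds for \emph{some} $\eta$, and shrinking $\eta$ changes the domain $B''$ of the Dirichlet kernel, so \eqref{Eqn:DerBound} for the smaller balls does not follow formally (it is also unnecessary, since you could aim directly at the stated bound with the $-3\eta$ loss, as the paper does by charging the diameter of $B'$). Second, the choice $\beta(t)\asymp\sqrt t$ with $t/K\le T_{\beta(t)}$ implicitly assumes a quantitative rate $T_\beta\gtrsim\beta^2$ for Lemma \ref{Lem:Exit}, which is not part of the abstract assumptions of Section \ref{Sect:Localizing}; this is repairable (fix $\beta$ and the time-partition, take $\limsup_{t\searrow 0}$ first, then send $\beta,\delta\searrow 0$ and $K\to\infty$, since prefactors are irrelevant for $4t\log$), but as written it overreaches. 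The essential missing piece, however, is the unavailable full-kernel derivative estimate in ingredient (i).
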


\begin{proof}
Let $y_0\in K_0$ and the corresponding concentric balls $B$, $B'$, and $B''$ be as in the lemma. Let $\sigma$ be the first hitting time (of the diffusion $X_t$) of $\partial U_1$, let $\tau_1$ be the first hitting time of $\partial B'$ after $\sigma$, and let $\tau'_1$ be the first hitting time of $\partial B''$ after $\tau_1$. Then, for $i>1$, we iteratively define $\tau_i$ to be the first hitting time of $\partial B'$ after $\tau'_{i-1}$ and $\tau'_i$ the first hitting time of $\partial B''$ after $\tau_i$. In the notation of \eqref{Eqn:BN-A}, we have
\[
\partial_y^{\alpha} p_t(x,U_1^c,y) = \partial_y^{\alpha} p_t(x,y)- \partial_y^{\alpha} p^{U_1}_t(x,y) .
\]
We understand $p^{B''}_t(x,y)$ to be zero whenever $x\not\in B''$ or $t<0$, and we let $\mu^x_i$ be the (spacetime) distribution of $\lp \tau_i, X_{\tau_i}\rp$ when the diffusion is started from $x$, so that in particular, $\mu^x_i$ is a sub-probability measure on $[0,\infty) \times\partial B'$. Then we have the path decomposition
\begin{equation}\label{Eqn:MuX}
p_t(x,U_1^c,y)= \sum_{i=1}^{\infty} \int_{(z,\tau) \in\partial B'\times(0,t)}p^{B''}_{t-\tau}\lp z,y \rp \, d\mu^{x}_i ,
\end{equation}
which is valid for any $x\in K_0$ and $y\in B$.

We now bound the distributions of the $\tau_i$, starting with $\tau_1$. For any small enough $\delta$, let $V$ be a $\delta$-neighborhood of $\partial B'$. By the conclusion of Lemma \ref{Lem:Exit}, which we assume holds, there is a $t_0$, such that, for any $z\in K_1$, the probability that the diffusion, started from $z$, leaves the ball of radius $\delta$ centered at $z$ before $t_0$ is less than $1/2$. It follows that, for any $t<t_0$, for any $x\in K_0$,
\[
\frac{1}{2} \Prob^x\lp \tau_1 \leq t \rp \leq \int_V p_t\lp x,U^c,z\rp \, d\nu(z) .
\]
On the other hand, by the triangle inequality and the definitions of $U_1$, $\sigma$, and $\tau_1$, we have $d(x,U_1^c,z)\geq d\lp x,\partial B' \rp +\eps-\delta$ for all $z\in V$ and $x\in K_1$. Note that $\overline{B''}$ is a compact subset of $U_0$, and thus for small enough $\delta$, \eqref{Eqn:ThroughUc} holds for $y\in V$. So for all small enough $t$,
\[
\int_V p_t(x,z) \, d\nu(z) \leq \nu(V) \exp\lb -\frac{\lp d(x,\partial B') +\eps-2\delta \rp^2}{4t} \rb ,
\]
uniformly in $x\in K_0$. Since $\nu(V)$ is positive for all $\delta>0$, we conclude that
\begin{equation}\label{Eqn:Tau1}
\limsup_{t\searrow 0} 4t \log \Prob^x\lp \tau_1 \leq t \rp \leq - \lp d(x,\partial B') +\eps \rp^2
\end{equation}
uniformly over $x\in K_0$.

By the strong Markov property, the distribution of $\tau_i-\tau'_{i-1}$ depends only on $X_{\tau'_{i-1}}$; in particular, it doesn't depend on $i$, and similarly for the distribution of $\tau'_i-\tau_{i}$. Moreover,  $\tau_i-\tau'_{i-1}$ is conditionally independent of all of the previous increments $\tau_j-\tau'_{j-1}$ and $\tau'_{j-1}-\tau_{j-1}$ for $j <i$ given $X_{\tau'_{i-1}}$, and similarly for $\tau'_i-\tau_{i}$. Now, in order for $\tau_i$ to be less than $t$, $\tau_i-\tau'_{i-1}$ and $\tau'_{i-1}-\tau_{i-1}$ must both be less than $t$. Further, $\tau_i-\tau'_{i-1}$ can only be less than $t$ if the shifted diffusion $\{X_{\tau'_{i-1}+t}, t\geq 0 \}$ moves a distance $2\eta$ from its starting point (at $X_{\tau'_{i-1}}\in \partial B''$) in time $t$ or less, and similarly, $\tau'_{i-1}-\tau_{i-1}$ can only be less than or equal to $t$ if it moves a distance $2\eta$ in time $t$ or less. By independence and the conclusion of Lemma \ref{Lem:Exit}, it follows that there exists $t_0$ such that, for $t<t_0$ and any $i\geq 2$,
\[
\Prob^x\lp \tau_i-\tau_1<t \rp<\lp\frac{1}{4}\rp^{i-1}
\]
for all $x\in K_0$.

Combining this with \eqref{Eqn:Tau1} and again using the independence, we see that,
for any $\delta>0$ there exists $t_0>0$ such that 
\[
\Prob^x\lp \tau_i \leq t \rp \leq \lp \frac{1}{4}\rp^{i-1} \exp\lb - \frac{\lp d(x,\partial B') +\eps-\delta \rp^2}{4t}\rb
\]
whenever $0<t<t_0$, for all $x\in K_0$ and all $i=1,2,3,\ldots$. Returning to the spacetime hitting measures $\mu^x_i$ in \eqref{Eqn:MuX}, we see that if we consider their restrictions to $[0,t]$ (for $t<t_0$), their masses are absolutely summable, by the above and the summability of the geometric series. More precisely, if we let $\mu^{x,t}$ denote their sum, then it is a finite measure on $\partial B'\times[0,t]$ with total mass bounded from above by
\begin{equation}\label{Eqn:FBound}
F(t)=\frac{1}{2} \exp\lb - \frac{\lp d(x,\partial B') +\eps-\delta \rp^2}{4t}\rb
\end{equation}
for any $x\in K_0$. That is, if $F^{x,t_0}(t)$ is the distribution function of the time-marginal of $\mu^{x,t_0}$, viewed as a measure on $[0,t_0]$ indexed by $x$, then $F(t)$ is an upper bound on $F^{x,t_0}(t)$ (for all $t\in [0,t_0]$) which holds uniformly in $x$. Combining this with \eqref{Eqn:MuX} and the smoothness of $p^{B''}_{t}\lp z,y \rp$ in $y\in B$ for fixed $t>0$ (as elaborated on in the proof of Lemma \ref{Lem:NestedBalls}), the dominated convergence theorem allows us to show that
\[
\partial_y^{\alpha} p_t(x,U_1^c,y)= \int_{(z,\tau) \in\partial B'\times(0,t)} \partial_y^{\alpha} p^{B''}_{t-\tau}\lp z,y \rp \, d\mu^{x,t} .
\]
Using \eqref{Eqn:DerBound}, we see that, for any $\delta$, for all small enough $t$ (uniformly in $x\in K_0$ and $y\in B$),
\[\begin{split}
\lab\partial_y^{\alpha} p_t(x,U_1^c,y)\rab &\leq \int_{(z,\tau) \in\partial B'\times(0,t)} \exp\lb -\frac{(d(\partial B',y) -\delta)^2}{4(t-\tau)}\rb  \, d\mu^{x,t} \\
&= \int_{\tau \in(0,t)} 
\exp\lb -\frac{(d(\partial B',y) -\delta)^2}{4(t-\tau)}\rb dF^{x,t_0}(\tau) ,
\end{split}\]
where we've used the fact that the integrand doesn't depend on $z\in \partial B'$ to reduce to the time-marginal of $\mu^{x,t}$. Because the integrand is monotone decreasing, we get an upper bound from the upper bound \eqref{Eqn:FBound} on the distribution function, so that, for some $C>0$
\[\begin{split}
&\lab\partial_y^{\alpha} p_t(x,U_1^c,y)\rab \leq  \int_{\tau \in\times(0,t)}  \exp\lb -\frac{(d(\partial B',y) -\delta)^2}{4(t-\tau)}\rb dF(\tau) \\
&=  \int_{\tau \in(0,t)} \exp\lb -\frac{(d(\partial B',y) -\delta)^2}{4(t-\tau)}\rb \exp\lb - \frac{\lp d(x,\partial B') +\eps-\delta \rp^2}{4t}\rb  \frac{\lp d(x,\partial B') +\eps-\delta \rp^2}{8t^2}  \, d\tau \\
& \leq C \int_{\tau \in(0,t)} \exp\lb -\frac{(d(\partial B',y) -\delta)^2}{4(t-\tau)}\rb \exp\lb - \frac{\lp d(x,\partial B') +\eps-2\delta \rp^2}{4t}\rb  \, d\tau ,
\end{split}\]
where, in the final line, we've absorbed the $(( d(x,\partial B') +\eps-\delta )^2)/(8t^2)$ into the second exponential at the cost of doubling $\delta$ and multiplying by a constant.

Note that, for any constants $a,b,t>0$ the maximum of 
\[
-\frac{a^2}{4(t-\tau)}-\frac{b^2}{4\tau} \quad\text{for $\tau\in (0,t)$}
\]
is $-(a+b)^2/(4t)$ (achieved at $\tau= t/(1+(a/b))$). Then, applying this to the integrand above and using that $\delta>0$ is arbitrary to clean up the constants, we have that, for any $\delta>0$, there exists $t_0>0$ such that, for all $t\in (0,t_0)$, all $y\in B$, and all $x\in K_0$,
\[
\lab\partial_y^{\alpha} p_t(x,U_1^c,y)\rab \leq  
\exp\lb - \frac{\lp d(x,\partial B')+ d(\partial B',y) +\eps-\delta \rp^2}{4t}\rb .
\]
Since the diameter of $B'$ is at most $3\eta$, we conclude that, for every $\alpha$,
\[
\limsup_{t\searrow 0}  4t \log\lp  \lab\partial_y^{\alpha} p_t(x,U_1^c,y)\rab \rp \leq \lp d(x,y) +\eps-3\eta -\delta \rp^2
\]
uniformly over $(x,y)\in K_0 \times B$. Since $\delta$ is arbitrary, we can send it to zero, completing the proof.
\end{proof}


\section{Proving the Riemannian bounds}\label{Sect:Riemannian}

We now return to the situation of Theorems \ref{THM:Main} and \ref{THM:NonCut}; namely, where $M$ is a (possibly incomplete) Riemannian manifold, $\Delta=\Delta_{LB}+Z$ where $Z$ is conservative, $d(\cdot, \cdot)$ is the Riemannian distance, and $p_t(x,y)$ is the corresponding heat kernel with respect to the Riemannian volume. This is a special case of the situation considered in the previous section, so those results apply here.

As is clear from the previous two sections, we mostly work with partial derivatives in coordinates. But since we eventually want to give an estimate for covariant derivatives, we now relate the two. Since covariant differentiation is tensorial in the vector fields along which we differentiate, it's essentailly enough to look at the coordinate vector fields.

\begin{lemma}\label{Lem:Cov}
Let $B\subset M$ be an open ball in $M$ with compact closure and extendable coordinates $\partial^1,\ldots,\partial^n$, and suppose that $f$ is a smooth function on a neighborhood of $\overline{B}$. Then if $I=(i_1,\ldots,i_N)$ is a sequence of indices from ${1,\ldots,n}$, we have
\[
\nabla^N_{\partial^{I}} f (x) = \nabla^N_{\partial^{i_1},\ldots,\partial^{i_N}} f (x) = \partial^{i_1}\cdots\partial^{i_N} f(x) +\sum_{J:J\subsetneq I} F^{I,J}(x) \partial^J f(x)
\]
where the sum is over subsequences $J$ of $I$ which are neither empty nor all of $I$, and each $F^{I,J}$ is a smooth function on a neighborhood of $\overline{B}$ depending on the Christoffel symbols (of the Levi-Civita connection on $M$). 

Further, there exists $c>0$ (depending only on $N$, $B$, and the choice of coordinates), such that, for any collection $V_1,\ldots,V_N$ of (smooth) vector fields of length 1 or less,
\[
\nabla^N_{V_1,\ldots,V_N} f (x) = \sum_{I\in \{1,\ldots,n\}^N} a_I(x) \nabla^N_{\partial^{I}} f (x) ,
\]
where the $a_I$ are smooth functions on a neighborhood of $\overline{B}$, indexed by all sequences of indices $I$ and depending on the $V_i$, such that $|a_I(x)|<c$ for all $I$ and all $x$ in a neighborhood of $\overline{B}$.
\end{lemma}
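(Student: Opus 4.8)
The plan is to prove the two displayed identities by unwinding the definition of iterated covariant differentiation and then bookkeeping the resulting terms.

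\textbf{First identity.} I would argue by induction on $N$. For $N=1$ there is nothing to prove, since $\nabla_{\partial^{i_1}}f=\partial^{i_1}f$. For the inductive step, recall that $\nabla^N_{\partial^{i_1},\dots,\partial^{i_N}}f$ is obtained from the $(N-1)$-tensor $T=\nabla^{N-1}f$ by applying $\nabla_{\partial^{i_1}}$ and then contracting against $\partial^{i_2}\otimes\cdots\otimes\partial^{i_N}$ — equivalently, it is the $(\partial^{i_2},\dots,\partial^{i_N})$-component of $\nabla_{\partial^{i_1}}T$. Writing $\nabla_{\partial^{i_1}}T$ in coordinates produces the ordinary partial derivative $\partial^{i_1}$ of the components of $T$, minus a sum of Christoffel symbols $\Gamma^{k}_{i_1 j}$ times components of $T$ obtained by replacing one lower index. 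By the inductive hypothesis, every component of $T$ is $\partial^{j_2}\cdots\partial^{j_N}f$ plus a smooth-coefficient combination of lower-order partials $\partial^J f$ with $J\subsetneq(j_2,\dots,j_N)$. Applying $\partial^{i_1}$ to this gives $\partial^{i_1}\cdots\partial^{i_N}f$ (from the leading term) plus, via the product rule, terms where $\partial^{i_1}$ hits either the smooth coefficients or the lower-order partials; all of these are smooth coefficients (smooth because the $\Gamma$'s and their derivatives are smooth on a neighborhood of $\overline B$) times partials $\partial^J f$ with $J$ a proper subsequence of $I$. The Christoffel correction terms are manifestly of the same form (smooth coefficient times $\partial^J f$ with $|J|\le N-1$). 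Collecting everything and noting the order of mixed partials is immaterial by smoothness of $f$ gives the claimed formula, with $F^{I,J}$ a universal polynomial in the Christoffel symbols and their derivatives, hence smooth on a neighborhood of $\overline B$.

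\textbf{Second identity.} Fix smooth vector fields $V_1,\dots,V_N$ of length at most $1$ and expand each in the coordinate frame, $V_m=\sum_{j} v_m^{\,j}\,\partial^{j}$, where the $v_m^{\,j}$ are smooth on a neighborhood of $\overline B$. Since $\nabla^N f$ is a tensor, it is multilinear over $C^\infty$ in its $N$ arguments, so
\[
\nabla^N_{V_1,\dots,V_N}f=\sum_{I=(i_1,\dots,i_N)} v_1^{\,i_1}\cdots v_N^{\,i_N}\,\nabla^N_{\partial^{I}}f,
\]
which is the asserted formula with $a_I=v_1^{\,i_1}\cdots v_N^{\,i_N}$. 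These are smooth, and to bound them I use that $|V_m|\le 1$: in coordinates this means $\sum_{j,k} g_{jk} v_m^{\,j} v_m^{\,k}\le 1$, and since the metric coefficients $g_{jk}$ are smooth (hence the smallest eigenvalue of $(g_{jk})$ is bounded below by a positive constant on a neighborhood of $\overline B$ by compactness), each $|v_m^{\,j}|$ is bounded by a constant depending only on $B$ and the coordinates. Hence $|a_I(x)|\le c$ for a constant $c$ depending only on $N$, $B$, and the choice of coordinates, as claimed.

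\textbf{Main obstacle.} There is no deep difficulty here; the proof is entirely formal. The one point requiring mild care is the inductive bookkeeping in the first identity: one must track that applying $\nabla$ to the $(N-1)$-tensor $\nabla^{N-1}f$ — not to the scalar $\nabla^N_{\partial^{i_2},\dots,\partial^{i_N}}f$ — is what introduces the Christoffel corrections on \emph{all} the slots, and organize the resulting terms so that they land in the stated form (smooth coefficient times a proper-subsequence partial). The other item worth stating explicitly is the uniform lower bound on the metric over the relevant neighborhood, which is what makes the $a_I$ uniformly bounded; this is immediate from continuity of $g$ and compactness of $\overline B$.
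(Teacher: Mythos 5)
Your proposal is correct and follows essentially the same route as the paper: induction on $N$ via the Leibniz rule for the covariant derivative of the $(0,N)$-tensor $\nabla^{N-1}f$ (respectively $\nabla^N f$), tracking Christoffel corrections and the product rule, followed by expanding the $V_m$ in the coordinate frame and using multilinearity together with the uniform positive-definiteness of $g$ on a neighborhood of $\overline{B}$ to bound the coefficients. The only cosmetic caveat, inherited from the statement itself (see the paper's $N=2$ formula, where $\partial_k f$ appears for arbitrary $k$), is that the lower-order correction terms involve partials indexed by arbitrary shorter multi-indices rather than literal subsequences of $I$, which affects nothing in how the lemma is used.
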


\begin{proof}
The first result is trivial for $N=1$ and well known for $N=2$, where the formula for the $(0,2)$-version of the Hessian in coordinates is
\[
\nabla^2_{\partial_j,\partial_i} f = \partial_j\partial_i f -\sum_{k=1}^n\Gamma^k_{ji}\partial_k f .
\]
We now proceed by induction, assuming the result for any $\nabla^N_{\partial^{I}}$. Since $\nabla^N_{\partial^{I}}$ is a $(0,N)$-tensor, we compute its covariant derivative as
\[
\nabla_{\partial_j}\nabla^N_{\partial^{I}} f = \partial_j \nabla^N_{\partial^{I}} f -
 \nabla^N_{\nabla_{\partial_j}\partial^{i_1},\partial^{i_2}\ldots,\partial^{i_N}} f -\cdots- 
  \nabla^N_{\partial^{i_1}\ldots,\partial^{i_{N-1}}, \nabla_{\partial_j}\partial^{i_N}} f 
\]
Then using the definition of the Christoffel symbols and multilinearity, we have
\[
\nabla_{\partial_j}\nabla^N_{\partial^{I}} f = \partial_j \nabla^N_{\partial^{I}} f -
\sum_{k=1}^n\lb \Gamma_{j i_1}^k\nabla^N_{\partial^k,\partial^{i_2}\ldots,\partial^{i_N}} f -\cdots- 
\Gamma^k_{j i_N}  \nabla^N_{\partial^{i_1}\ldots,\partial^{i_{N-1}}, \partial^{k}} f \rb .
\]
Using the induction assumption on all $N$th-order covariant derivatives on the right-hand side and basic calculus, the result follows for  $\nabla_{\partial_j}\nabla^N_{\partial^{I}} f$. But since all $N+1$st-order covariant derivatives under consideration are of this form for some $j$ and $I$, the first claim follows.

For the second, since the metric in the coordinate system is smooth and uniformly positive definite, any unit vector can be written as a linear combination of coordinate vector fields with uniformly bounded coefficients. Then the claim follows by linear algebra.
\end{proof}

\begin{proof}[Proof of Theorem \ref{THM:Main}]
Note that the condition on $K$, namely that $\{z:d(x,z)+d(z,y)< d(x,y)+\eps\}$ has compact closure for all $(x,y)\in K$, is symmetric in $x$ and $y$, that is, it is preserved under reflection across the diagonal in $M\times M$. Therefore, we can enlarge $K$ by taking the union with its reflection to get a symmetric set which satisfies the same condition, and obviously it's enough to prove the theorem for this (potentially) larger set. Thus, without loss of generality, we assume that $K$ is symmetric for the rest of the proof.

Because $Z$ is conservative, there exists a smooth measure $\nu$ such that $p_t^{\nu}(x,y)$ is symmetric. To elaborate, recall that $\Delta_{LB}=\diver_{\mu}\circ\grad $, where $\diver_{\mu}$ is the divergence with respect to the Riemannian volume $\mu$ and $\grad$ is the Riemannian gradient. Any other smooth volume $\nu$ can be written as $e^f\mu$ for some smooth $f$, and then we can write our operator in divergence form with respect to $\nu$ as
\[
\Delta = \diver_{\nu}\circ\grad -\grad f +Z ,
\]
where $\grad f$ is a vector field acting on smooth functions. Since $\diver_{\nu}\circ\grad$ is symmetric with respect to $\nu$ and $Z-\grad f$ is only when it is identically 0, we see that $\Delta$ admits a symmetrizing volume if and only if $Z=\grad f$ for some smooth $f$, in which case the symmetrizing measure is $\nu =e^f\mu$ (unique up to constant rescaling). Thus, we now let $\nu$ be this measure for the given $Z$, and we note that $p_t^{\nu}$ satisfies the assumptions of the previous section and is also symmetric.

Let $y_0$ be any point on $M$. Then there exists some $\eta$ such that the concentric balls $B$, $B'$, and $B''$ around $y_0$ are such that $B''$ has compact closure and is contained in a single extendable coordinate system. Using a bump function, $B''$ can be isometrically included in a Riemannian manifold $M'$ diffeomorphic to $\bR^n$ with a Riemannian metric and smooth volume that satisfy the assumptions under which L\'eandre proved \eqref{Eqn:LeandreCoarse2}. In particular, \eqref{Eqn:LeandreAgain} is satisfied for $p_t^{\nu,M'}(x,y)$ uniformly for $(x,y)$ in any compact subset of $M'\times M'$, and thus for $x$ and $y$ in $\overline{B''}$. It follows, by Lemma \ref{Lem:NestedBalls}, that, for
any multi-index $\alpha$,
\begin{equation}\label{Eqn:DerBoundNu}
\limsup_{t\searrow 0} 4t \log \lp \lab \partial_y^{\alpha} p^{\nu,B''}_t(x,y)  \rab \rp \leq - d^2(x,y)
\end{equation}
uniformly over $(x,y)\in \overline{B'}\times B$. But since $p^{\nu,B''}_t(x,y)$ only depends on (the Riemannian structure and the volume on) $B''$, it this holds for $p^{\nu,B''}_t(x,y)$ with $B''$ viewed as an open ball in the original $M$ as well. Also note that, for fixed $y_0$, if \eqref{Eqn:DerBoundNu} holds for some $\eta$, then it also holds for any smaller $\eta$.

Now take any $(x_0,y_0)\in K$. By the triangle inequality and continuity of the distance, there is some $\delta>0$ such that, if $N_{x_0}(\delta)$ is an open $\delta$-ball around $x_0$ and $N_{y_0}(\delta)$ is an open $\delta$-ball around $y_0$ (here we use $N$ instead of $B$ for these balls to avoid notational confusion with $B$, $B'$, and $B''$), then
\[
U_1= \bigcup_{x\in N_{x_0}(\delta),y\in N_{y_0}(\delta)} \lc z:d(x,z)+d(z,y)< d(x,y)+\frac{\eps}{2}\rc
\]
is open with compact closure (because it is a subset of $\{z:d(x_0,z)+d(z,y_0)< d(x_0,y_0)+\eps\}$ for small enough $\delta$). Now let
\[\begin{split}
K_0 &=   \overline{N_{x_0}(\delta/2)} \cup \overline{N_{y_0}(\delta/2)} , \\
U_0 &=   N_{x_0}(\delta) \cup N_{y_0}(\delta) , \\
\text{and}\quad  K_1 &=  \overline{U_0}.
\end{split}\]
Then $K_0\subset U_0\subset K_1\subset U_1\subset M$ where $K_0$ and $K_1$ are compact and $U_0$ and $U_1$ are open with compact closure. Further, again by the triangle inequality, we have that there exists $\eps'>0$ such that
\begin{equation}\label{Eqn:U1}
d\lp x,U^c_1,y\rp > d(x,y)+\eps'
\end{equation}
for all $x$ and $y$ in $K_1$. Because $\Delta$ is symmetric with respect to $\nu$ and $K_1$ is a compact subset of $U_1$, we can apply the result of Bailleul-Norris, so that \eqref{Eqn:BN-A} yields
\[
\limsup_{t\searrow 0} 4t \log p^{\nu}_t(x,U^c_1,y) \leq -\lp d(x,y)+\eps'\rp^2
\]
uniformly for $x$ and $y$ in $K_1$. After possibly shrinking $\eta$, we have that $\overline{B''}\subset U_0$, which means we've shown that all of the assumptions of Lemma \ref{Lem:Uc} are satisfied. Further, after possibly shrinking $\eta$ again, we have $3\eta<\eps'/2$. We conclude that, for any multi-index $\alpha$,
\begin{equation}\label{Eqn:Local}
\limsup_{t\searrow 0}  4t \log\lp  \lab\partial_y^{\alpha} p^{\nu}_t(x,U_1^c,y)\rab \rp \leq -\lp d(x,y) +\eps'-3\eta \rp^2 < -\lp d(x,y) +\frac{\eps'}{2} \rp^2
\end{equation}
uniformly over $x\in K_0$ and $y\in B$.

Because $U_1$ has compact closure, we can isometrically include it in a compact Riemannian manifold $M'$ (say, by a smooth doubling construction), equipped with a measure that extends $\nu$ and a conservative vector field that extends $Z$; we just extend the smooth $f$ in $\nu=e^f\mu$, and the associated heat kernel also remains symmetric. Moreover, we can choose $M'$ such that \eqref{Eqn:U1} holds with respect to the distance on $M'$ as well. (In particular, we can use a bump function to conformally rescale the metric outside of $U_1$ to make paths from $x$ to $y$ that leave $U_1$ ``too long,'' and for all $(x,y)\in K_1$ by smoothness and compactness.) Then the argument leading to \eqref{Eqn:Local} applies to $p^{\nu,M'}_t(x,U_1^c,y)$ as well (and perhaps more easily, since $M'$ is compact). Because $p^{\nu}_t(x,y) =  p^{\nu,U_1}_t(x,y) +  p^{\nu}_t(x,U_1^c,y)$, $p^{\nu,M'}_t(x,y) =  p^{\nu,U_1}_t(x,y) +  p^{\nu,M'}_t(x,U_1^c,y)$, and $p^{\nu,U_1}_t(x,y)$ is the same whether it's viewed as a subset of $M$ or of $M'$, we conclude that, for any $\delta>0$ and any multi-index $\alpha$, there exists $t_0>0$, such that
\[
\lab\partial_y^{\alpha} p^{\nu}_t(x,y) - \partial_y^{\alpha} p^{\nu,M'}_t(x,y) \rab \leq \exp\lb -\frac{\lp d(x,y) +\frac{\eps'}{2}\rp^2 -\delta}{4t}\rb
\]
for all $x\in K_0$, $y\in B$, and $t\in (0,t_0]$. By Lemma \ref{Lem:Cov}, for each $N$, there is some $t_0>0$ such that
\begin{equation}\label{Eqn:Close}
\lab  \nabla^N_{V_1,\ldots,V_N} p^{\nu}_t(x,y) -  \nabla^N_{V_1,\ldots,V_N}  p^{\nu,M'}_t(x,y) \rab \leq \exp\lb -\frac{\lp d(x,y) +\frac{\eps'}{2}\rp^2 -\delta}{4t}\rb
\end{equation}
for any collection of unit vector fields $V_1,\ldots, V_N$ (acting on $y$) on $B$, and any $x\in K_0$, $y\in B$, and $t\in (0,t_0]$ (perhaps after decreasing $\delta$). 

Moreover, because $M'$ is compact and $p^{\nu,M'}_t$ is symmetric (so that we can exchange $x$ and $y$ derivatives) we know that $p^{\nu,M'}_t$ satisfies 
\begin{equation}\label{Eqn:ApproxSol}
\lab \nabla_y^N p_t^{\nu,M'}(x,y)\rab \leq D_N\lb \frac{d(x,y)}{t} +\frac{1}{\sqrt{t}}\rb^N p_t^{\nu,M'}(x,y)
\end{equation}
for $t\in(0,1]$, uniformly over $(x,y)\in M'$, for some sequence of constants.  Since both $p_t^{\nu,M'}$ and $p_t^{\nu}$ satisfy \eqref{Eqn:BN-A} and \eqref{Eqn:BN-Limit}, we see that they also satisfy \eqref{Eqn:Close} with $N=0$ (that is, such a bound holds for the heat kernels themselves, not just their derivatives). It follows that
\[
4t \log p^{\nu, M'}_t(x,y) \rightarrow -d^2(x,y) \quad\text{and}\quad\frac{p^{\nu,M'}_t(x,y)}{p^{\nu}_t(x,y)} \rightarrow 1\quad \text{as $t\searrow 0$,}
\]
uniformly for $x\in K_0$ and $y\in B$. In light of this and \eqref{Eqn:Close} (recall that $\delta$ can be chosen as small as needed), we can replace $p_t^{\nu,M'}$ with $p_t^{\nu}$
on both sides of \eqref{Eqn:ApproxSol} at the cost of possibly adjusting the $D_N$. We conclude that, for any $N$,  there is a constant $D_N$ and a $t_0>0$ such that
\begin{equation}\label{Eqn:LocalVersion}
\lab \nabla_y^N p_t^{\nu}(x,y)\rab \leq D_N\lb \frac{d(x,y)}{t} +\frac{1}{\sqrt{t}}\rb^N p_t^{\nu}(x,y)
\end{equation}
for all $t\in(0,t_0)$, $x\in K_0$, and $y\in B$.

Because $x$ is in the interior of $K_0$ and $B$ is open, and $(x,y)$ is an arbitrary point in $K$, we've proven that any point in $K$ has an open neighborhood on which \eqref{Eqn:LocalVersion} holds. By compactness of $K$, this means that we can find $D_N$ and $t_0$ such that \eqref{Eqn:LocalVersion} holds on all of $K$. Further, as already noted, we are free to let $t_0=1$ by smoothness, at the cost of again adjusting the $D_N$. Now by symmetry (of both $K$ and the heat kernel with respect to $\nu$) we can replace the $x$-derivatives by $y$-derivatives, at which point we use Lemma \ref{Lem:Constants} to replace $p_t^{\nu}$ with $p_t$ (since we no longer need the symmetry) and then again to deduce \eqref{Eqn:LogDer-Cor} from \eqref{Eqn:PDer}. This proves the theorem.
\end{proof}

\begin{proof}[Proof of Theorem \ref{THM:NonCut}]
As in the proof of Theorem \ref{THM:Main}, we assume $K$ is symmetric (recall that $\Cut(M)$ is symmetric, so this is compatible with the assumptions), and we construct $K_0\subset U_0\subset K_1\subset U_1\subset M$ in the same way.

The difference with the proof of Theorem \ref{THM:Main} is that now we use bump function to include $U_1$ in a manifold $M'$ for which the heat kernel satisfies the following expansion: there exists some $t_0>0$ and a function $c_0(x,y)$, smooth on a neighborhood of $K$ such that $c_0>0$ on $K$ and the heat kernel satisfies the expansion
\begin{equation}\label{Eqn:BA}
p^{\nu,M'}_t(x,y)= \frac{1}{t^{n/2}} e^{-\frac{d^2(x,y)}{4t}}\lp c_0(x,y)+r(t,x,y)\rp
\end{equation}
where the remainder $r(t,x,y)$ satisfies the bound
\[
\sup_{(t,x,y)\in (0,t_0]\times K} \lab \partial_x^{\alpha}\partial_y^{\beta} \partial^k_t r(t,x,y)\rab <\infty
\]
for all multi-indices $\alpha$ and $\beta$ and all $k=0,1,2,\ldots$. That this is possible follows from the well-known paper of Ben Arous \cite{BenArous}, although one should account for any topological difficulties in the inclusion, as done in \cite{WithLudovic}. Alternatively, Ludewig \cite{Ludewig1} proves that \eqref{Eqn:BA} holds for compact $M'$ (or even a general complete $M'$), when one considers the specialization from his vector bundle context to the scalar Laplacian. (Actually, both authors give an expansion to any number of terms, but we only need the leading coefficient $c_0$ at present.)

Either way, we have such an $M'$ so that \eqref{Eqn:Close} holds. Since the right-hand side of \eqref{Eqn:Close} is exponentially small compared to the remainder in \eqref{Eqn:BA}, we have that \eqref{Eqn:BA} holds for $p^{\nu}$ at the cost of altering the remainder but still having it satisfy the the same bound. Thus
\[
\log p^{\nu}_t(x,y)= -\frac{n}{2}\log t -\frac{d^2(x,y)}{4t}+ \log\lp c_0(x,y)+r(t,x,y)\rp .
\]
Taking derivatives yields
\[
-4t \nabla^N_y \log p^{\nu}_t(x,y) - \nabla^N_y d^2(x,y) = -4t \nabla^N_y \log\lp c_0(x,y)+r(t,x,y)\rp .
\]
Now since $c_0>0$ and $r$ has uniformly bounded derivatives, $\nabla^N_y \log\lp c_0(x,y)+r(t,x,y)\rp$ is uniformly bounded for $(t,x,y)\in (0,t_0]\times K$ and thus the right-hand side of the above is uniformly $O(t)$. By symmetry, we have that
\[
-4t \nabla^N_x \log p^{\nu}_t(x,y) -\nabla^N_x d^2(x,y) =O(t) \quad\text{as $t\searrow 0$}
\]
uniformly in $(x,y)\in K$. Because
\[
\log p_t(x,y) = \log p_t^{\nu}(x,y) + \log \frac{d \nu}{d \mu}(y) ,
\]
applying $-4t \nabla^N_x$ to both sides gives the theorem.

Finally, in regard to Remark \ref{Rmk:YDers}, we note that 
\[
4t \nabla^N_y \log p_t(x,y)  -4t\nabla^N_y \log p_t^{\nu}(x,y) = 4t \nabla^N_y \log \frac{d \nu}{d \mu}(y) .
\]
Since $\frac{d \nu}{d \mu}(y)$ is smooth and positive for $y\in \pi_2(K)$, the right-hand side is $O(t)$ uniformly, just as we saw for $\nabla^N_y \log\lp c_0(x,y)+r(t,x,y)\rp$ above. This shows that Theorem \ref{THM:NonCut} holds for either $x$ or $y$ derivatives, as desired.
\end{proof}

\begin{remark}\label{Rmk:Elton}
As mentioned, weaker versions of these theorems can be proven using results on Riemannian Brownian motion from the 90s. In particular, assume that, in Theorems \ref{THM:Main} and \ref{THM:NonCut}, $Z\equiv 0$ and $K$ satisfies the stronger assumption that there exists $\eps>0$ such that, for all $(x,y)\in K$, $d(x,\infty)+d(y,\infty)> d(x,y)+\eps$, where
\[\begin{split}
d(x,A) &= \inf\lc d(x,z): z\in A\rc \\
\text{and}\quad d(x,\infty) &= \sup\lc d(x, A) : \text{$A$ closed and $M \setminus A$ relatively compact}\rc .
\end{split}\]
In this case, we can use that \eqref{Eqn:LogDer} itself holds on compact manifolds, along with Lemmas \ref{Lem:Constants} and \ref{Lem:Cov} to move between partial derivatives and covariant derivatives as well as between logarithmic derivatives and ``regular'' derivatives of $p_t$, in place of L\'eandre's result \eqref{Eqn:LeandreCoarse2}. That gives the necessary input for Lemma \ref{Lem:NestedBalls}. Moreover, a pair of papers of Hsu \cite{HsuLocal, HsuIncomplete} can be used in place of the results of Bailleul-Norris \eqref{Eqn:BN-A} and \eqref{Eqn:BN-Limit}, giving the necessary input for Lemma \ref{Lem:Uc} with a larger $U_1$ than before, reflecting the weaker localization condition for $K$.

Given this, the proofs of Theorems \ref{THM:Main} and \ref{THM:NonCut} proceed in essentially the same way. In Theorem \ref{THM:Main} we localize \eqref{Eqn:LogDer} as proven in Hsu \cite{HsuLogDer} and Stroock-Turetsky \cite{StroockTuretsky} , with no need to refer to \cite{WithLudovic} for the case of non-zero $Z$, and similarly in Theorem \ref{THM:NonCut}, we can localize the results of \cite{Stroock-Malliavin} without using the expansion \eqref{Eqn:BA} to handle the case of non-zero $Z$, although at the cost of weakening the $O(t)$ rate of convergence.
\end{remark}


\section{Sharpness of the upper bound and explicit examples}\label{Sect:Misc}

In \cite{WithLudovic}, an expression for the leading (that is, the $1/t^N$ order) term in the asymptotics of an $N$th-order derivative of $\log p_t(x,y)$ is given. Briefly, if $\Gamma$ is the set of midpoints of minimal geodesics from $x$ to $y$, there is a probability measure $m_0$ on $\Gamma$ such that, for any smooth vector fields $Z_1,\ldots, Z^N$ in a neighborhood of $y$,
\begin{equation}\label{Eqn:Cumulants}
\lim_{t \searrow 0} 
 t^N Z_y^N \cdots Z^1_y \log p_{t}(x,y) = 
 \lp -\frac{d(x,y)}{2} \rp^N   \kappa^{m_0} \Big(Z^1_yd(\cdot,y),\ldots,  Z^N_yd(\cdot,y) \Big) .
\end{equation}
where $\kappa^{m_0}$ is the joint cumulant with respect to $m_0$ and the $Z^i_yd(\cdot,y)$ are viewed as random variables on $\Gamma$. Note that if $(x,y)\not\in \Cut(M)$, then $\Gamma$ is a single point, $m_0$ is a point mass, and any cumulant of order 2 or more is 0, which is consistent with Theorem \ref{THM:NonCut}. However, since this cumulant will generally be non-zero for $(x,y)\in \Cut(M)$, it's clear that the upper bound in \eqref{Eqn:LogDer} is sharp (that is, the constant is not explicitly given, but $1/t^N$ is the correct order of blow up) even on a compact Riemannian manifold (so this is not a matter of completeness or incompleteness). To underscore this, and also to illustrate the pointwise result just mentioned, we treat the simple example of the circle explicitly.

If we consider $\bS^1\cong \bR/\Zed$ with the standard heat kernel (that is, with $Z\equiv 0$), then for any $x$ and $y$ (understood mod 1), we have
\[
p_t^{\bS^1}(x,y) = \frac{1}{\sqrt{4\pi t}}\sum_{n=-\infty}^{\infty} e^{-\frac{(y-x+n)^2}{4t}} .
\]
Here the symmetry between $x$ and $y$ is explicit, and one can take spatial derivatives on either variable. By translation invariance, without loss of generality we can let $x=0$, in the sense that it is the image of 0 under the quotient map, in which case the cut locus of $x$ is the point $y=\frac{1}{2}$, again in the sense that it is the image of $\frac{1}{2}$ under the quotient map. More geometrically, $y$ is the antipodal point of $x$ and there are two minimizing geodesics from $x$ to $y$, corresponding to the two ways to go halfway around the circle.

To compute the log-Hessian of $p_t(0,1/2)$, let $y=\frac{1}{2}+\alpha$, for small $\alpha$ (which is trivially normal coordinates centered at $\frac{1}{2}$). Then explicit computation gives that,
\[\begin{split}
\partial_{\alpha} p_t^{\bS^1}\lp 0,\frac{1}{2}+\alpha \rp &=
\frac{1}{\sqrt{4\pi t}}\sum_{n=-\infty}^{\infty} -\frac{\frac{1}{2}+\alpha+n}{2t}e^{-\frac{\lp\frac{1}{2}+\alpha+n\rp^2}{4t}}  \quad\text{and}\\
\partial_{\alpha}^2 p_t^{\bS^1}\lp 0,\frac{1}{2}+\alpha \rp &=
\frac{1}{\sqrt{4\pi t}}\sum_{n=-\infty}^{\infty} \lb -\frac{1}{2t}+\frac{\lp\frac{1}{2}+\alpha+n\rp^2}{4t^2}\rb e^{-\frac{\lp\frac{1}{2}+\alpha+n\rp^2}{4t}} .
\end{split}\]
Letting $\alpha=0$, we see that only two terms from the sum contribute (up to an exponentially small error), corresponding to the two minimizing geodesics. In particular, for any $\eps>0$, we have
\[\begin{split}
p_t^{\bS^1}\lp 0,\frac{1}{2} \rp &= \frac{1}{\sqrt{4\pi t}}\lb 2e^{-\frac{1}{16t}} \rb +O\lp e^{-\frac{9-\eps}{16t}}\rp , \\
\partial_{\alpha} p_t^{\bS^1}\lp 0,\frac{1}{2} \rp &= 0  \quad\text{and} \\
\partial_{\alpha}^2 p_t^{\bS^1}\lp 0,\frac{1}{2} \rp &=
\frac{1}{\sqrt{4\pi t}} \lb -\frac{1}{2t}+\frac{1}{16t^2}\rb 2 e^{-\frac{1}{16t}} +O\lp e^{-\frac{9-\eps}{16t}} \rp.
\end{split}\]
Then since $\partial^2_{y}\log p_t(x,y) = \frac{\partial^2_{y} p_t(x,y)}{p_t(x,y)} -\lp\frac{\partial_{y} p_t(x,y)}{p_t(x,y)}\rp^2$, we have
\[
\partial^2_{x}\log p_t^{\bS^1}\lp 0,\frac{1}{2} \rp =\partial^2_{y}\log p_t^{\bS^1}\lp 0,\frac{1}{2} \rp = -\frac{1}{2t}+\frac{1}{16 t^2} +O\lp  e^{-\frac{8-\eps}{16t}}\rp
\]
for any $\eps>0$.

We note that here we have computed the logarithmic Hessian of the heat kernel directly (that is, by elementary methods, independent of any of the preceding results of this paper). In reference to \eqref{Eqn:Cumulants}, note that, by symmetry, $m_0$ assigns probability $1/2$ each to the 2 midpoints of the 2 minimizing geodesics from 0 to $1/2$, and thus the random variable $\partial_{\alpha} d\lp \cdot, \frac{1}{2}\rp$ under $m_0$ takes values $\pm1$ each with probability $1/2$. So the variance (note that the joint cumulant of 2 random variables is their covariance) in \eqref{Eqn:Cumulants} is 1, and we have
\[
\lim_{t\searrow0}t^2 \nabla_x^2 \log p^{\bS^1}_t\lp 0,\frac{1}{2}\rp = \frac{d^2\lp 0,\frac{1}{2}\rp}{4} = \frac{1}{16}
\]
using $\partial_{\alpha}\otimes\partial_{\alpha}$ as a basis for bilinear forms on $T_xM$, in agreement with the above.

Also, note that this is not special to this simple example. Any time (on any Riemannian manifold and for any $x$ and $y$ points such that the conditions of Theorem \ref{THM:Main} hold) there are exactly 2 non-conjugate minimizing geodesics between $x$ and $y$, one will have similar behavior for the logarithmic Hessian.

\section{The non-symmetric and sub-Riemannian cases}

Now suppose $Z$ is such that $\Delta$ no longer admits a symmetrizing measure. Then $y$-derivatives of the heat kernel can still be localized, and thus are seen to satisfy the same estimates as above, under either the condition on the distance to infinity in Theorem \ref{THM:Main} (if the asymmetry is ``not too bad'' in the sense of the sector condition of \cite{IsmaelNorris}) or the condition described in Remark \ref{Rmk:Elton} (in general). However, without the symmetry, $y$-derivatives cannot be transferred to $x$-derivatives. Indeed, when exchanging the role of the variables in general, one needs to take the adjoint of the operator (this is the relationship between the forward and backward Kolmogorov equations). The point is that the adjoint may have a potential term (that is, 0th-order term), which is not covered by the preceding arguments and is less easy to localize.

To quickly illustrate the point, we let $(u,v)$ be Cartesian coordinates on $\bR^2$. If we consider $\Delta=\partial_u^2+\partial_v^2+a(u,v)\partial_u+b(u,v)\partial_v$ then the (formal) adjoint, with respect to the standard Euclidean area, is $\Delta^* = \partial_u^2+\partial_v^2-(u,v)\partial_u-b(x,y)\partial_v-\lp \partial_u a\rp (u,v) - \lp \partial_v b\rp(u,v)$. Thus we now have a potential, and to find a stochastic representation for the heat kernel, corresponding to $\Delta^*$, we should look at the Feynman-Kac formula
\begin{equation}\label{Eqn:FK}
p_t(x,y) = \E^x \lb \exp\lp \int_0^t \lp \partial_u a+\partial_v b \rp\lp X_s\rp \, ds\rp \rb
\end{equation}
where $X_t$ is the diffusion with generator $\partial_u^2+\partial_v^2-(u,v)\partial_u-b(x,y)\partial_v$ started from $x$. On a non-compact manifold, the integrand on the right-hand side need not be bounded, so even if the probability of a path leaving some open $U$ and eventually arriving at $y$ can be estimated to be small, as above, the contribution of such a path to $p_t$ is weighted by the Feynman-Kac factor, which depends on the entire trajectory and which we have no a priori control over. Similarly, trying to treat the potential via an h-transform runs into the problem that it's not clear in general why the heat equation for the adjoint should admit a positive solution. Indeed, it is not clear that we should expect \eqref{Eqn:LogDer} to hold on a non-compact Riemannian manifold without some additional assumptions to control the potential. In that spirit, one can see  \cite{XM-ParabolicSch} for recent results in this direction on complete (but not necessarily compact) Riemannian manifolds under additional geometric assumptions (for example, curvature bounds, the existence of a pole, etc.). (Note also that the case where the path integral in \eqref{Eqn:FK} only depends on the endpoints and thus can be localized is exactly the case when $Z$ admits a potential, and this is directly related to the density of the symmetrizing measure.)

In a different direction, suppose $p_t(x,y)$ is the heat kernel associated to a sub-Laplacian on a sub-Riemannian manifold $M$. Then the localization for $p_t$ and its $y$-derivatives still works (and thus under a symmetry assumption, it also works for $x$-derivatives), but we don't have a global estimate to localize. That is, no estimates comparable to \eqref{Eqn:LogDer} are known to hold for compact sub-Riemannian manifolds (or for structures on $\bR^n$ given by bounded vector fields with bounded derivatives of all orders, as in the work of L\'eandre and Ben Arous used above). More precisely, such bounds are known to hold away from abnormal minimizers (see \cite{WithLudovic}), but on a properly sub-Riemannian manifold (that is, not one that happens to be Riemannian), the diagonal is always abnormal, and thus one does not have appropriate estimates on any $K$ that intersects the diagonal. (Note that more than the asymptotic results of \cite{Trelat} are needed in the present context.) Derivative bounds for the special case of H-type groups were established in \cite{Nate} and \cite{HQLi}, including bounds for the log-gradient of the heat kernel in \cite{HQLi}, but this comes nowhere near the complete generality we have in the Riemannian situation.

That said, note that the semi-martingale property for the bridge process on compact, step-2 sub-Riemannian manifolds was established in \cite{XM-HypoBridge}, so there is hope for future advances.


\section{Acknowledgements}

This work was partially supported by a grant from the Simons Foundation (\#524713 to Robert Neel).

\providecommand{\bysame}{\leavevmode\hbox to3em{\hrulefill}\thinspace}
\providecommand{\MR}{\relax\ifhmode\unskip\space\fi MR }
\providecommand{\MRhref}[2]{%
  \href{http://www.ams.org/mathscinet-getitem?mr=#1}{#2}
}
\providecommand{\href}[2]{#2}

\end{document}